\numberwithin{equation}{section}
\newtheorem{theorem}{Theorem}
\newtheorem{lemma}{Lemma}[section]
\newtheorem{proposition}[lemma]{Proposition}
\theoremstyle{definition}
\newtheorem{definition}[lemma]{Definition}
\theoremstyle{remark}
\newtheorem{remark}[lemma]{Remark}
\newtheorem{question}{Question}
\newcommand{\cA}{\mathcal{A}} 
\newcommand{\cC}{\mathcal{C}} 
\newcommand{\cF}{\mathcal{F}} 
\newcommand{\cG}{\mathcal{G}} 
\newcommand{\cL}{\mathcal{L}} 
\newcommand{\cM}{\mathcal{M}} 
\newcommand{\cT}{\mathcal{T}} 
\newcommand{\cK}{\mathcal{K}} 
\newcommand{\bC}{\mathbb{C}} 
\newcommand{\bN}{\mathbb{N}} 
\newcommand{\bR}{\mathbb{R}} 
\newcommand{\bT}{\mathbb{T}} 
\newcommand{\bZ}{\mathbb{Z}} 
\newcommand{\fA}{\mathfrak{A}} 
\newcommand{\fB}{\mathfrak{B}} 
\newcommand{\abs}[1]{\left|#1\right|} 
\newcommand{\norm}[1]{{\left\|#1\right\|}} 
\newcommand{\Jac}[2]{\operatorname{Jac}({#2|}_{#1})} 
\newcommand{\bigG}{\widehat{\Gamma}} 
\newcommand{\len}[1]{\abs{#1}} 
\newcommand{\Jmp}{J} 
\newcommand{\gamr}{\gamma} 
\newcommand{\gaml}{\gamma^{\prime}} 
\newcommand{\doi}[1]{}
\title{Discontinuities cause essential spectrum on surfaces}
\author[O. Butterley]{Oliver Butterley}
\address{(Oliver Butterley) Department of Mathematics, University of Rome Tor Vergata -- Via della Ricerca Scientifica 1 -- 00133 Roma -- Italy}
\email{butterley@mat.uniroma2.it}
\author[G. Canestrari]{Giovanni Canestrari}
\address{(Giovanni Canestrari) Department of Mathematics, University of Rome Tor Vergata -- Via della Ricerca Scientifica 1 -- 00133 Roma -- Italy}
\email{canestrari@mat.uniroma2.it}
\author[R. Castorrini]{Roberto Castorrini}
\address{(Roberto Castorrini) Dipartimento di Matematica, Universit\`a di Pisa -- Largo B.~Pontecorvo 5 -- 56127 Pisa --  Italy}
\email{roberto.castorrini@gmail.com}
\date{}
\begin{document}

\begin{abstract}
    Two-dimensional maps with discontinuities are considered. 
    It is shown that, in the presence of discontinuities, the essential spectrum of the transfer operator is large whenever it acts on a Banach space with norm that is stronger than \(L^\infty\) or \(BV\). 
    Three classes of examples are introduced and studied, both expanding and partially expanding.
    In two dimensions there is complication due to the geometry of the discontinuities, an issue not present in the one-dimensional case and which is explored in this work.
\end{abstract}

\maketitle
\thispagestyle{empty}

\section{Introduction}
\label{sec:intro}

Dynamical systems with some degree of hyperbolicity are typically known to behave with good statistical properties, e.g., exponential decay of correlations with respect to a physical measure, linear response, etc.
A particularly strong property for the system is to exhibit a full resonance spectrum, in the sense that correlations can be described to arbitrary precision (see, e.g.,~\cite{FGL19,BS20,BKL22} and references within). 
However this is not always possible.
Systems with discontinuities often occur in physically motivated examples, e.g., in billiard flows due to glancing collisions and in the Lorenz flow due to a shearing behaviour at a singular point.
To what extent can discontinuities show an effect in the statistical properties? 

The subject of this present work is to demonstrate that discontinuities really do affect the statistical properties of a dynamical system, besides causing countless technical problems in proofs.
In one-dimensional systems there is a rather complete description of how discontinuities cause essential spectrum~\cite{BCJ22}.
However the restriction on dimension is unfortunate in view of application to general systems and so here we take the step of extending to higher dimension.
For simplicity we consider the two-dimensional case, but with the aim of developing the tools for higher dimensions.
We work with expanding or partially expanding systems, leaving aside for moment the equally relevant hyperbolic case. 

The question of resonance spectrum is closely related to the one of determining the spectrum of the transfer operator acting on some Banach space.
Specifically, it is useful to know that the essential spectrum is small since the spectrum outside of the essential spectrum consists of isolated eigenvalues of finite multiplicity. 
In such a way the ``true" point spectrum is revealed and not the possible ``fake" part due to the choice of the particular Banach space.
In general the spectrum of an operator depends on the choice of Banach space but, given the statistical properties motivations, it is natural to require the space to contain all smooth observables.
On the other hand, the Banach space can't be too small because then the spectrum might be missing some information about the observables of interest.
The present question is closely related to zeta functions, a notion of describing resonances inherent to a system (see, e.g., \cite{Baladi18}). 
The analogous result in this context would be showing the impossibility of meromorphically extending the zeta function to the whole complex plane.

In the case of one-dimensional expanding interval maps it was shown~\cite{BCJ22} that there is a dichotomy, either the system is Markov or there exists an unavoidable disc of essential spectrum with radius dependent on the asymptotic behaviour of the weight of the chosen transfer operator.
In the two-dimensional case we haven't been able to conclude such a simple story. 
One issue is that, in higher dimensions, there are multiple different ways a system can fail to be Markov (see further discussion in Section~\ref{sec:discuss}).
This issue is intimately connected with the fact that discontinuities in two dimensions (and higher) can exhibit subtle and complex behaviour (see, e.g., \cite{Buzzi00,Buzzi01,Tsujii00}). 
Consequently it is much harder to work with the discontinuity set compared to the one-dimensional case where the singularities are isolated points and there is no geometry to consider.
Indeed, it is well established that controlling ``complexity'' of the discontinuity is essential for working with such systems (see, e.g., \cite{Saussol00} and \cite[Chapter 5]{CM06} concerning controlling complexity and demonstrating that growth dominates cutting).

In the one-dimensional case~\cite{BCJ22} there were some restrictions, due to the method, on the Banach space for which the result applies.
To be precise, the result held only for Banach spaces which are stronger than \(L^\infty\).
This restriction is due to the way the technique involves the behaviour in the dual: loosely speaking, if the space is too large then the dual is too small to complete the argument.
However, these authors believe that this is an artificial requirement since the claimed result is known to also hold for all other Banach spaces which they are aware of~\cite[\S2]{BCJ22}.
In two dimensions the reference norm is less obvious and the results here are presented for two separate assumptions, namely \(L^\infty\) and \(BV\).
Neither of these spaces is stronger than the other in dimension two or higher and so the estimates which follow are, depending on the system being studied, sometimes better in one case, sometimes in the other.

In this text we present the main result (Theorem~\ref{thm:main}) and three distinct example classes of system for which this applies.
The first example (Section~\ref{sec:affinemap}) is a class of piecewise affine expanding maps of the torus where the discontinuity propagates along lines which, iterate after iterate, change in slope and so never overlap with each other.
The second class of examples (Section~\ref{sec:cocycle}) consists of maps of the torus which can be written as cocycles.
Here we see that the non-Markovness of the base map leads to a discontinuity of the two-dimensional system and hence unavoidable essential spectrum.
The third class of examples (Section~\ref{sec:local-example}) are expanding maps of the torus which are smooth except for in a very small neighbourhood. 
In this way we see that a local discontinuity of the system is sufficient to lead to large essential spectrum.
In the presentation of the main argument we take this opportunity to isolate the abstract functional analytic result (Proposition~\ref{prop:abs}) which is at the core of this work and which was also present in the one-dimensional case yet slightly obscured.

Continuing forward, there are the following pertinent questions.

\begin{question}[dichotomy for the 2D case]
    Can a lower bound on the essential spectral radius be proven for any two-dimensional piecewise expanding map which fails to be Markov?
    The assumption used in this work is arguably natural and verifiable.
    Nonetheless it would be tidy to either have no additional assumption or to show the existence of obstacles to such a relaxation of the assumptions (see Section~\ref{sec:discuss}).
\end{question}

\begin{question}[hyperbolic case]
    Can these ideas be extended to the hyperbolic case? 
    Specifically, consider the two-dimensional hyperbolic map associated to Sinai billiards.
    Can one prove a lower bound for the essential spectral radius of the transfer operator associated to the SRB measure for any useful Banach space?
    Such a study would need to consider Banach spaces of distributions, specifically the anisotropic spaces suitable for studying hyperbolic systems (see Remark \ref{rem:compare}).
\end{question}

\begin{question}[diverse notions of resonances]
    Recall that there are several closely related notions:
    \begin{itemize}
        \item Obstacles to the meromorphic extension of the zeta function,
        \item Precision to which it is possible to describe correlations in terms of resonances for \(\cC^\infty\) observables,
        \item Lower bound on essential spectral radius of the transfer operator for any useable Banach space.
    \end{itemize}
    Is it possible to formalize completely the connection between these notions? 
\end{question}

\begin{question}[arbitrary dimension]
    Can these ideas be extended to arbitrary dimension for expanding systems?
    The structure of this present argument suggests that such would be possible in a similar way but using codimension one linear functionals.
\end{question}

Section~\ref{sec:results} contains the key definitions and our results.
Section~\ref{sec:discuss} is devoted to discussion of the assumptions on the discontinuities.
The proof of an abstract functional analytic result is contained in Section~\ref{sec:ess-spec}.
This is a result which relates the existence of a family of linear functionals to a lower estimate of the essential spectral radius and so, in Section~\ref{sec:lin-funcs} we construct the relevant linear functionals. 
Section~\ref{sec:examples} contains a description and the results related to three example classes of systems.

\section*{Acknowledgements}

\begin{small}
    We are grateful to Péter Bálint, Bernardo Carvalho, Daniele Galli, Marco Lenci, Carlangelo Liverani and Daniel Smania for several enlightening discussions. 
    We also thank the anonymous referee for their useful comments and suggestions. 
    This research is part of the authors' activity within the the UMI Group ``DinAmicI'' and the INdAM group GNFM.
    The authors are partially supported by PRIN Grant ``Regular and stochastic behaviour in dynamical systems" (PRIN 2017S35EHN). 
    O.B. and G.C. acknowledge the MIUR Excellence Department Projects awarded to the Department of Mathematics, University of Rome Tor Vergata, CUP E83C18000100006, CUP E83C23000330006.
\end{small}

\section{Settings and results}
\label{sec:results}

Let $\cM$ be a compact, smooth and connected Riemannian manifold of dimension $d=2$. 
We suppose that \(\cF : \cM \to \cM\) is a \(\cC^r\) \emph{piecewise non-singular map}, \(r \ge 1\), in the sense that \(\cF\) is a \(\cC^r\) local diffeomorphism on \(\cM \setminus \Gamma\) where \(\Gamma\) is a finite union of finite length \(\cC^1\) curves and the derivative map is uniformly non-singular.\footnote{We write \emph{uniformly non-singular} to mean that the determinant of the derivative map (in some coordinate atlas) is uniformly bounded away from zero.}
For convenience we suppose that \(\cF\) is orientation preserving.

Let \(\varphi: \mathcal{M} \rightarrow \bC\) be \(\cC^1\) on \(\cM \setminus \Gamma\), uniformly bounded and uniformly bounded away from zero. %
We call \(\varphi\) the \emph{weight} of the transfer operator.
The transfer operator is defined, pointwise for any \(h: \cM \rightarrow \bC\), as
\begin{equation}
    \label{eq:def-L}
    \cL h(y)= \sum_{\cF x=y} (h \cdot \varphi)(x).
\end{equation}

Keeping track of the discontinuity set will be crucial and so we introduce the following notation.
Let \(M\) denote the closure of \(\cM \setminus \Gamma\) in the sense that \(M\) is a smooth Riemannian manifold with boundary and the boundary comprises of a finite union of \(\cC^1\) curves originating from \(\Gamma\). 
Call \(F : M \to \cM\) the unique \(\cC^r\) extension of \(\cF\) to \(M\).
In this way we have a solid way to work with the limits from one side and the other of \(\Gamma\). 
Let \(\iota : M \to \cM\) denote the embedding between manifolds and observe that it is 1-to-1 except on \(\partial M\) where\footnote{Intersections of curves in \(\Gamma\) (at most a finite number) cause \(\iota\) to be many-to-1.} it is mostly 2-to-1.
Let,
\[
    \Gamma_1 = F\Gamma, \quad\quad 
    \Gamma_{j+1} = \cF^j \Gamma_{1}, \ j \in \bN, \quad\quad 
    \bigG = \bigcup\nolimits_{j\in\bN}\Gamma_j \subset \cM.
\]
We anticipate here that the transfer operator \(\cL\) \eqref{eq:def-L} involves the inverse of \(\cF\) and so the location of discontinuities of the images of a smooth observable under iterates of \(\cL\) is \(\widehat \Gamma\) (see Lemma~\ref{lem:struct-of-functions}).

If \(A,B\) are two sets in \(M\), we say that they are \emph{equivalent}, and we write \(A \approxeq B\), if \(A\) and \(B\) are equal modulo a set of points whose \(1\)-dimensional Hausdorff measure is zero.
For any curve \(\gamma \subset \cM\) and map \(\cG : \cM \to \cM\), let \(\Jac{\smash{\gamma}}{\cG}\) denote the \emph{Jacobian} of \(\cG\) restricted to the curve \(\gamma\).\footnote{In particular this means that, if \(\cG\) restricted to \(\gamma\) is measurable and injective, then, for any \(\phi : \gamma\to \bR\), \(\int_{\cG \gamma} \phi(y) \ dy = \int_{\gamma} \phi(\cG x) \Jac{\gamma}{\cG}(x) \ dx\).}

\begin{definition}[proper discontinuity]
    \label{def:proper}
    Given $\cF$ as above, we say that a $\cC^1$ connected curve \(\gamma \subset \partial M\) is a \emph{proper discontinuity} if, denoting\footnote{Recall that $F$ is the extension of $\cF$ to $M$.} \(\gamma_1 =F\gamma\), \(\gamma_{k+1} = \cF^{k}\gamma_1\), \(k \in \bN\), the following are satisfied:
    \begin{enumerate}[label=(A-\arabic*),labelsep=1em,labelindent=!,start=0]
        \item \label{it:A0} 
        \(\cF\) is not continuous at \(x\) for all \(x\in\iota\gamma\);
        \item \label{it:A1}
        There exist functions, \(\alpha_k : \gamma_k \to \bC\), \(k\in\bN\) such that \(\alpha_{1} \equiv 1\) and
            \begin{equation}
                \label{eq:good-weights}
                \alpha_{k+1}(\cF x) = {\alpha_k(x)}\varphi(x)^{-1} \Jac{\gamma_k}{\cF}(x)^{-1} \quad \text{for a.e. \(x \in \gamma_{k}\);}\footnote{I.e., for all \(x\in \tilde\gamma_k\) where \(\tilde\gamma_k \approxeq \gamma_k\).}
            \end{equation}
        \item \label{it:A2} 
        \(\gamma_k \cap \Gamma_1 \approxeq \emptyset \) for all \(k\geq 2\);
        \item \label{it:A3} 
        \(\cF^{-1}\gamma_{k+1} \cap \bigG\approxeq \gamma_{k}\) for all \(k\in\bN\).
    \end{enumerate}
\end{definition}
Notice that the right hand side of \eqref{eq:good-weights} is well defined since \(\varphi(x)\) is uniformly bounded away from zero and \(\cF\) is non-singular.
We denote by $BV(\cM)$ the usual Banach space of functions of bounded variation on $\cM$.
For convenience we use the notation \(\varphi_n = \prod_{k=0}^{n-1} \varphi\circ \cF^k\).
Given a map $\cF: \cM\to \cM$ and transfer operator weight $\varphi:\cM \to \bC$ as above, if \(\gamma\) is a proper discontinuity, we consider the following quantities:\footnote{In the appropriate context, as here, \(\len{\cdot}\) denotes the length of the curve.}
\begin{equation}
    \label{eq:Lambda}
    \begin{aligned}
        \Lambda_{\operatorname{BV}}\left(\cF, \varphi, \gamma\right)
         & =\liminf_{k \rightarrow \infty} \left(\inf_{x \in \gamma} \left| \varphi_k(x) \cdot \Jac{\gamma}{\cF^k}(x) \right|\right)^{\frac{1}{k}},                           \\
        \Lambda_{L^{\infty},1}\left(\cF, \varphi, \gamma\right)
         & = \liminf_{k \rightarrow \infty} \left( \inf_{x \in \gamma} \left|\varphi_k(x)  \right| \right)^{\frac{1}{k}},                                                      \\
        \Lambda_{L^{\infty},2}\left(\cF, \varphi, \gamma\right)
         & = \liminf_{k \rightarrow \infty} \left(\inf_{x \in \gamma} \left| \varphi_k(x) \cdot \Jac{\gamma}{\cF^k}(x) \right| \len{\gamma_{k+1}}^{-1} \right)^{\frac{1}{k}}, \\
        \Lambda_{L^{\infty}}
         & = \max \left(\Lambda_{L^{\infty},1}, \Lambda_{L^{\infty},2}\right).
    \end{aligned}
\end{equation}
In different settings one or the other will provide a better bound (see Remark~\ref{rem:compare}).

Our main result is the following.
\begin{theorem}
    \label{thm:main}
    Let \(\cL\) be the transfer operator associated to a piecewise non-singular map \(\cF:\cM\to \cM\) and weight \(\varphi\) as above. 
    Suppose that $\cF$ admits a proper discontinuity $\gamma$
    and that the Banach space \((\fB,{\norm{\cdot}}_{\fB})\) satisfies
    \begin{itemize}
        \item \(\cC^{\infty}(\cM) \subset \mathfrak{B}\);
        \item \(\cL\) extends to a continuous operator on \(\mathfrak{B}\).
    \end{itemize}
    Then the essential spectral radius of \(\cL : \fB \to \fB\) is at least
    \begin{equation}
        \label{eq:defLambda}
        \begin{aligned}
            \Lambda_{BV}(\cF,\varphi, \gamma)    \quad      & \text{if \({\|h\|}_{BV} \le {\|h\|}_{\fB}\),  \(\forall h \in \fB\)},         \\
            \Lambda_{L^{\infty}}(\cF,\varphi, \gamma) \quad & \text{if  \({\|h\|}_{L^{\infty}} \le {\|h\|}_{\fB}\), \(\forall h \in \fB\)}.
        \end{aligned}
    \end{equation}
\end{theorem}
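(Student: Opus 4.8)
\emph{Overall strategy.} The plan is to deduce the theorem from an abstract criterion, which I would isolate as Proposition~\ref{prop:abs}: if the adjoint $\cL^{*}$ admits functionals $\ell_{k}\in\fB^{*}$, $k\ge1$, satisfying the shift relation $\cL^{*}\ell_{k+1}=\ell_{k}$ together with the decay $\limsup_{k}\norm{\ell_{k}}_{\fB^{*}}^{1/k}\le\Lambda^{-1}$, and if there are test vectors detected by these functionals, then $r_{\mathrm{ess}}(\cL)\ge\Lambda$. Everything in the proof of Theorem~\ref{thm:main} then reduces to manufacturing such functionals from the proper discontinuity $\gamma$ and reading off the correct rate $\Lambda$ in each of the two norm regimes.

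\emph{Jump functionals and the shift relation.} For a function $h$ that is $\cC^{1}$ off $\bigG$ (by Lemma~\ref{lem:struct-of-functions} this covers the dense subspace $\fA$) write $\Jmp_{k}h$ for its jump across $\gamma_{k}$, i.e. the difference of the two one-sided limits, and set
\[
    \ell_{k}(h)=\int_{\gamma_{k}}(\Jmp_{k}h)(y)\,\alpha_{k}(y)\,dy ,
\]
with $\alpha_{k}$ the weights provided by~\ref{it:A1}. The key identity is that applying $\cL$ transports the jump of $h$ at $x\in\gamma_{k}$ to the jump of $\cL h$ at $\cF x\in\gamma_{k+1}$, multiplied by $\varphi(x)$; after the substitution $y=\cF x$, with $dy=\Jac{\gamma_{k}}{\cF}(x)\,dx$, relation~\eqref{eq:good-weights} is exactly what makes $\varphi(x)$ and $\Jac{\gamma_{k}}{\cF}(x)$ cancel against $\alpha_{k+1}(\cF x)$, yielding $\ell_{k+1}(\cL h)=\ell_{k}(h)$, that is $\cL^{*}\ell_{k+1}=\ell_{k}$. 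It is precisely here that the geometric hypotheses enter: \ref{it:A2} and \ref{it:A3} guarantee that the jump of $\cL h$ along $\gamma_{k+1}$ is produced by the single inverse branch through $\gamma_{k}$ and is neither contaminated by other sheets of $\cF^{-1}$ nor by other curves of $\bigG$ meeting $\gamma_{k+1}$, while \ref{it:A0} ensures the gluing along $\gamma_{1}$ is genuinely discontinuous.

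\emph{Rates and test vectors.} Iterating~\eqref{eq:good-weights} from $\alpha_{1}\equiv1$ gives the closed form $\alpha_{k+1}(\cF^{k}x)=\bigl(\varphi_{k}(x)\,\Jac{\gamma_{1}}{\cF^{k}}(x)\bigr)^{-1}$ on $\gamma_{1}$. When $\norm{\cdot}_{BV}\le\norm{\cdot}_{\fB}$ the jump is controlled by the variation, so $\abs{\ell_{k}(h)}\le\norm{\alpha_{k}}_{L^{\infty}(\gamma_{k})}\norm{h}_{BV}$ and the closed form gives the rate $\Lambda_{BV}^{-1}$. When $\norm{\cdot}_{L^{\infty}}\le\norm{\cdot}_{\fB}$ one instead bounds $\abs{\ell_{k}(h)}\le2\norm{h}_{L^{\infty}}\norm{\alpha_{k}}_{L^{1}(\gamma_{k})}$; a change of variables evaluates $\norm{\alpha_{k+1}}_{L^{1}(\gamma_{k+1})}=\int_{\gamma_{1}}\abs{\varphi_{k}}^{-1}$, producing $\Lambda_{L^{\infty},1}^{-1}$, whereas a complementary estimate that passes through $\norm{\alpha_{k}}_{L^{\infty}}$ at the price of the factor $\len{\gamma_{k+1}}$ produces $\Lambda_{L^{\infty},2}^{-1}$; taking in each case the faster-decaying bound yields $\Lambda_{L^{\infty}}=\max(\Lambda_{L^{\infty},1},\Lambda_{L^{\infty},2})$. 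For test vectors I would take $g_{k}=\cL^{k}\psi$ for a fixed smooth $\psi$: by Lemma~\ref{lem:struct-of-functions} these lie in $\fA\subset\fB$, and the shift relation together with the smoothness of $\psi$ forces $\ell_{j}(g_{k})=0$ for $j>k$, so that the finite matrices $[\ell_{j}(g_{k})]$ are lower triangular with diagonal $\ell_{k}(\cL^{k}\psi)$, nonzero for suitable $\psi$ thanks to~\ref{it:A0}.

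\emph{From functionals to the lower bound, and the main obstacle.} To conclude I argue by contradiction. If $r_{\mathrm{ess}}(\cL)<\rho<\Lambda$, then $\cL$ is quasi-compact: there is a spectral projection $P$ of finite rank $d$ with $\fB=\operatorname{ran}P\oplus\ker P$ both $\cL$-invariant and $r(\cL|_{\ker P})<\rho$. Since the $d+1$ vectors $Pg_{1},\dots,Pg_{d+1}$ are dependent, some fixed $g=\sum_{i}c_{i}g_{i}$ has $Pg=0$, whence $\norm{\cL^{n}g}\le C\rho^{n}$. On the other hand, taking $m$ to be the largest index with $c_{m}\ne0$, the triangular structure gives $\ell_{m}(g)=c_{m}\ell_{m}(g_{m})\ne0$, and the shift relation gives $\ell_{n+m}(\cL^{n}g)=\ell_{m}(g)$, so that $\norm{\cL^{n}g}\ge\abs{\ell_{m}(g)}\,\norm{\ell_{n+m}}_{\fB^{*}}^{-1}\gtrsim\abs{\ell_{m}(g)}\,\Lambda_{\epsilon}^{\,n}$ for any $\Lambda_{\epsilon}<\Lambda$; choosing $\rho<\Lambda_{\epsilon}$ contradicts the upper bound as $n\to\infty$. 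I expect the genuine difficulty, and the only place where dimension two is felt, to be the clean jump-propagation identity $\cL^{*}\ell_{k+1}=\ell_{k}$: its validity rests entirely on \ref{it:A2}--\ref{it:A3} keeping the forward images $\gamma_{k}$ disjoint from the discontinuity set, a control that is automatic for the isolated singular points of the one-dimensional theory but delicate here because of the geometry of the curves in $\bigG$.
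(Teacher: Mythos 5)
Your construction of the functionals is exactly the paper's: the jump functionals $\ell_{k}(h)=\int_{\gamma_{k}}\alpha_{k}\,\Jmp(h,\cdot,v_{k})$, the shift relation $\cL^{*}\ell_{k+1}=\ell_{k}$ obtained by change of variables against \eqref{eq:good-weights}, the closed form $\alpha_{k+1}(\cF^{k}x)=\bigl(\varphi_{k}(x)\Jac{\gamma_{1}}{\cF^{k}}(x)\bigr)^{-1}$, and the three rate estimates ($\sup\abs{\alpha_k}$ against the $BV$-controlled jump integral; $\norm{\alpha_k}_{L^1}$ or $\sup\abs{\alpha_k}\cdot\len{\gamma_{k+1}}$ against the trivial $L^\infty$ jump bound) all match Proposition~\ref{prop:funct}, and you correctly locate the real two-dimensional difficulty in the jump-propagation identity and its dependence on \ref{it:A2}--\ref{it:A3} (with the caveat that $\cF:\gamma_k\to\gamma_{k+1}$ may be many-to-one, so the jump of $\cL h$ is a sum over preimages in $\gamma_k$ rather than a single branch; \ref{it:A1} is what makes the weights consistent across those preimages). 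Where you genuinely diverge is the last, abstract step. The paper's Proposition~\ref{prop:abs} forms $\Xi_{\lambda}=\sum_{k}\lambda^{k}\ell_{k}$ and shows it is a nonzero eigenfunctional of $(\cL-\cK)^{*}$ with eigenvalue $\lambda$ for every $\abs{\lambda}<\Lambda$, where $\cK h=\ell_{1}(\cL h)h_{0}$ is rank one; since compact perturbations preserve the essential spectral radius, this yields the bound and, as a bonus, a whole disc in the complement of the unbounded resolvent component. You instead run a quasi-compactness contradiction: $d+1$ test vectors $g_{k}=\cL^{k}\psi$, triangularity of $[\ell_{j}(g_{k})]$ from the smoothness of $\psi$, a combination $g$ killed by the finite-rank spectral projection, and the incompatible upper bound $\norm{\cL^{n}g}\lesssim\rho^{n}$ versus the lower bound $\norm{\cL^{n}g}\ge\abs{\ell_{m}(g)}\norm{\ell_{n+m}}^{-1}$. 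Both arguments are correct and rest on the same two inputs (decay of $\norm{\ell_k}$ and the shift relation); yours needs only finitely many test vectors and avoids the rank-one perturbation, while the paper's version is what delivers the stronger ``disc of spectrum'' remark. One structural point your sketch elides: since the $\ell_k$ are a priori defined only on $\fA$ with bounds in the weaker $BV$ or $L^\infty$ norm, the paper passes to the completion $\overline{\fA}$ and then compares essential spectral radii of $\cL$ on $\overline{\fA}$ and on $\fB$; in your version this is harmless because every vector you test ($\cL^{n}g\in\fA$) already lies in the domain of the $\ell_k$, but it should be said.
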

\noindent
In Section \ref{sec:examples} this result is applied to three classes of systems and the usefulness of the different bounds is discussed.

The above theorem follows from the following two propositions. The first is an abstract functional analytic result which uses the existence of a family of linear functionals to obtain a lower bound of the essential spectral radius for a linear operator.
\begin{proposition}
    \label{prop:abs}
    Let \(L\) be a bounded linear operator on a Banach space \((\fB,\norm{\cdot})\).
    Assume that there exist \(h_{0}\in \fB\), \(\Lambda>0\) and a sequence of linear functionals \({\{\ell_{k}\}}_{k=1}^{\infty}\), satisfying the following:%
    \footnote{We use the standard notation for the dual: if \(h\in\fB\) and \(\ell \in \fB^*\) is an element of the dual, i.e., a linear functional, the dual operator is \(\left(L^* \ell\right)(h) = \ell(Lh)\) and the dual norm is \(\norm{\ell} = \sup \left\{\abs{\ell(h)} : h \in \fB, \norm{h} \leq 1\right\}\).}
    \begin{enumerate}[label=(B-\arabic*),labelsep=1em,labelindent=!]
        \item 
              \label{it:B1}
             \(\limsup_{k\to\infty} \norm{\ell_{k}}^{\frac{1}{k}} \leq \Lambda^{-1}\);
        \item 
              \label{it:B2}
              \(\ell_{1}(h_{0}) = 1\) and, for all \(k\in\bN\),
              \(\ell_{k+1}(h_{0}) = 0\);
        \item 
              \label{it:B3}
              For all \(k\in\bN\), 
              \(L^{*}\ell_{k+1} = \ell_{k}\).           
    \end{enumerate}
    Then the essential spectral radius of \(L : \fB \to \fB\) is at least \(\Lambda\).
\end{proposition}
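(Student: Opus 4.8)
The plan is to argue by contradiction: assume $r_{\mathrm{ess}}(L) < \Lambda$ and produce an algebraic inconsistency inside the finite-dimensional ``top part'' of a spectral decomposition of $L$. First I would fix $\rho$ with $r_{\mathrm{ess}}(L) < \rho < \Lambda$. Since $\rho$ exceeds the essential spectral radius, the part of $\sigma(L)$ in $\{\abs{z}\ge\rho\}$ consists of finitely many eigenvalues of finite multiplicity, so the Riesz functional calculus yields a finite-rank spectral projection $P$ commuting with $L$. This splits $\fB = \fB_{1}\oplus\fB_{2}$ into $L$-invariant subspaces, where $\fB_{1}=P\fB$ is finite-dimensional, $L_{1}:=L|_{\fB_{1}}$ is invertible (its spectrum avoids $0$), and $r(L|_{\fB_{2}}) < \rho$, so that $\norm{L^{n}(I-P)}\le C\rho^{n}$ for some $C$ and all $n$.

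Next I would extract the exact values of the functionals along the orbit of $h_{0}$. Rewriting \ref{it:B3} as $\ell_{k}(Lh)=\ell_{k-1}(h)$ for $k\ge 2$ and iterating gives $\ell_{m}(L^{n}h)=\ell_{m-n}(h)$ whenever $m>n$. Combined with \ref{it:B2}, this shows that for every fixed $j\ge 0$ and all $n$ one has $\ell_{n+1+j}(L^{n}h_{0}) = \ell_{1+j}(h_{0})$, which equals $1$ when $j=0$ and $0$ when $j\ge 1$. I would then split $h_{0}=Ph_{0}+(I-P)h_{0}$ and evaluate $\ell_{n+1+j}(L^{n}h_{0})$ through the decomposition. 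The tail contribution is bounded by $\norm{\ell_{n+1+j}}\,C\rho^{n}$; using \ref{it:B1} to get $\norm{\ell_{n+1+j}}\lesssim(\Lambda^{-1}+\varepsilon)^{n+1+j}$ and choosing $\varepsilon$ small enough that $(\Lambda^{-1}+\varepsilon)\rho<1$ (possible since $\rho<\Lambda$), this term tends to $0$ as $n\to\infty$. On the finite-dimensional part, writing $w:=Ph_{0}\in\fB_{1}$ and $\ell_{k}^{1}:=\ell_{k}|_{\fB_{1}}$, relation \ref{it:B3} restricted to the invariant subspace $\fB_{1}$ reads $L_{1}^{*}\ell_{k+1}^{1}=\ell_{k}^{1}$, hence $\ell_{n+1+j}^{1}=(L_{1}^{*})^{-(n+j)}\ell_{1}^{1}$ and $\ell_{n+1+j}(L^{n}w)=\ell_{1}^{1}(L_{1}^{-j}w)$, independent of $n$. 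Passing to the limit yields the family of identities $\ell_{1}^{1}(L_{1}^{-j}w)=\delta_{j,0}$ for all $j\ge 0$.

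The contradiction is then purely algebraic. Setting $W=\operatorname{span}\{L_{1}^{-j}w:j\ge 0\}\subset\fB_{1}$, one has $L_{1}^{-1}W=\operatorname{span}\{L_{1}^{-j}w:j\ge 1\}=:V$. The identities $\ell_{1}^{1}(L_{1}^{-j}w)=0$ for $j\ge 1$ say that $\ell_{1}^{1}$ annihilates $V$, while $\ell_{1}^{1}(w)=1$ shows $w\notin V$; hence $V\subsetneq W$ and $\dim V<\dim W$. On the other hand $L_{1}^{-1}$ is injective, so $\dim V=\dim(L_{1}^{-1}W)=\dim W$, a contradiction, and the assumption $r_{\mathrm{ess}}(L)<\Lambda$ is untenable. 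I expect the genuine obstacle to be pinpointing where the contradiction lives: the purely ``diagonal'' test $\ell_{n+1}(L^{n}h_{0})=1$ is entirely consistent with the spectral splitting and yields nothing by itself (it only forces $\ell_{1}(Ph_{0})=1$). The inconsistency emerges solely from the whole family of ``off-diagonal'' tests $\ell_{n+1+j}(L^{n}h_{0})=0$, $j\ge 1$, so the delicate point is to run the limit $n\to\infty$ for each fixed $j$ simultaneously with the normalization at $j=0$, ensuring via the choice of $\rho$ and \ref{it:B1} that the resonant finite-dimensional contribution is separated cleanly from the exponentially small tail before the dimension count is applied.
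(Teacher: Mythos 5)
Your argument is correct, but it proceeds by a genuinely different route from the paper. The paper gives a direct construction: for each $\lambda$ with $\abs{\lambda}<\Lambda$ it forms the generating functional $\Xi_{\lambda}=\sum_{k\ge 1}\lambda^{k}\ell_{k}$ (convergent by \ref{it:B1}, nonzero by \ref{it:B2}), introduces the rank-one operator $\cK h=\ell_{1}(Lh)\,h_{0}$, and checks via \ref{it:B3} that $(L-\cK)^{*}\Xi_{\lambda}=\lambda\,\Xi_{\lambda}$; hence the whole open disc of radius $\Lambda$ lies in $\sigma(L-\cK)$, which forces $r_{\mathrm{ess}}(L-\cK)\ge\Lambda$, and invariance of the essential spectral radius under compact perturbation transfers this to $L$. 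You instead argue by contradiction through the Riesz spectral decomposition $\fB=\fB_{1}\oplus\fB_{2}$ at a radius $\rho\in(r_{\mathrm{ess}}(L),\Lambda)$ (you should note that $\rho$ must be chosen so the circle $\abs{z}=\rho$ misses the finitely many peripheral eigenvalues, but that is routine), use \ref{it:B1} to kill the $\fB_{2}$-tail, and reduce \ref{it:B2}--\ref{it:B3} to the identities $\ell_{1}^{1}(L_{1}^{-j}w)=\delta_{j,0}$ on the finite-dimensional piece, which contradict the injectivity of $L_{1}^{-1}$ by a dimension count. Each step checks out: the iteration $\ell_{m}(L^{n}h)=\ell_{m-n}(h)$, the restriction $L_{1}^{*}\ell_{k+1}^{1}=\ell_{k}^{1}$, the $n$-independence of $\ell_{1}^{1}(L_{1}^{-j}w)$, and the final $V\subsetneq W$ versus $\dim V=\dim W$ clash are all valid. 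The trade-off: your proof avoids constructing the compact perturbation $\cK$ and isolates the obstruction as elementary linear algebra, but it yields only the inequality $r_{\mathrm{ess}}(L)\ge\Lambda$; the paper's explicit eigenfunctionals $\Xi_{\lambda}$ additionally show that an entire disc of radius $\Lambda$ sits inside the spectrum of a rank-one perturbation of $L$, which is what underlies the paper's later remark that the complement of the unbounded component of the resolvent set contains a disc of radius $\Lambda_{BV}$ or $\Lambda_{L^{\infty}}$ --- information your contradiction argument does not recover.
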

\noindent
Section~\ref{sec:ess-spec} is devoted to the proof of the above.

At the very least we would like to consider the set of observables, \(\cC^{\infty}(\cM)\).
However \(\cL\) might not leave \(\cC^{\infty}(\cM)\) invariant, particularly in the discontinuous case of present interest.
Let \(\fA = \mathrm{span}\left({\{\cL^n (\cC^{\infty}(\cM))\}}_{n=0}^{\infty} \right)\). The following result uses the existence of a proper discontinuity to construct a family of linear maps which satisfy the assumptions of the above proposition. 
\begin{proposition}
    \label{prop:funct}
    Let \(\cL\) be the transfer operator associated to a piecewise non-singular map \(\cF:\cM\to \cM\) and weight \(\varphi\) as above. 
    Suppose that $\cF$ admits a proper discontinuity $\gamma$. 
    Then there exists a sequence of linear maps \({\{\ell_{k}: \fA \to \bC\}}_{k\in\bN}\) and \(h_0 \in \fA\) which satisfy:%
    \footnote{Where \(\Lambda_{L^\infty} = \Lambda_{L^\infty}(\cF,\varphi, \gamma)\) and \(\Lambda_{BV} = \Lambda_{BV}(\cF,\varphi, \gamma)\) are as defined in \eqref{eq:Lambda}. When this does not create confusion, subsequently we may suppress the dependency on \((\cF,\varphi, \gamma)\).}
    \begin{equation}
        \label{eq:two-bounds}
        \begin{aligned}
            \limsup_{k\to\infty} \Big( \sup \left\{ \abs{\ell_{k}(h)} : {h\in \fA, \norm{h}_{BV} \leq 1} \right\} \Big)^{\frac{1}{k}} &\leq  \Lambda_{BV}^{-1}, \\
            \limsup_{k\to\infty} \Big( \sup \left\{ \abs{\ell_{k}(h)} : {h\in \fA, \norm{h}_{L^\infty} \leq 1} \right\} \Big)^{\frac{1}{k}} &\leq  \Lambda_{L^\infty}^{-1};
        \end{aligned}
    \end{equation}
    And also satisfy:
    \begin{itemize}
        \item \(\ell_{1}(h_{0}) = 1\) and, for all \(k\in\bN\),
        \(\ell_{k+1}(h_{0}) = 0\);
        \item For all \(k\in\bN\), \(h\in \fA\),
        \(\ell_{k+1}(\cL h) = \ell_{k}(h)\).
    \end{itemize}    
\end{proposition}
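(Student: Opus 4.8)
The plan is to define each functional $\ell_k$ as the $\alpha_k$-weighted integral, along $\gamma_k$, of the jump of $h$ across that curve. For $h\in\fA$ write $\Jmp_{\gamma_k}h$ for the difference of the one-sided limits of $h$ along $\gamma_k$; these exist because, by Lemma~\ref{lem:struct-of-functions}, every element of $\fA$ is $\cC^1$ off $\bigG$, while $\gamma_k\subset\Gamma_k\subset\bigG$. I then set
\[
    \ell_k(h) = \int_{\gamma_k} \alpha_k(x)\,\Jmp_{\gamma_k}h(x)\,dx ,
\]
with the orientation defining the jump fixed consistently along the family $\{\gamma_k\}$, which is unambiguous since $\cF$ is orientation preserving.

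The heart of the argument is the co-invariance $\ell_{k+1}(\cL h)=\ell_k(h)$. Fix $y\in\gamma_{k+1}$ and analyse the jump of $\cL h=\sum_{\cF x=y}(h\cdot\varphi)(x)$ as $y$ crosses $\gamma_{k+1}$. A summand is smooth in $y$ except where its preimage branch meets $\bigG$, or where a branch is created or destroyed across $\Gamma_1=F\Gamma$. Since $k+1\ge2$, assumption \ref{it:A2} gives $\gamma_{k+1}\cap\Gamma_1\approxeq\emptyset$, so no branch appears or disappears across $\gamma_{k+1}$; and \ref{it:A3} ensures that the only preimage branch lying on the discontinuity set $\bigG$ of $h$ is the one through $\gamma_k$. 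Hence the jump of $\cL h$ is carried by this single branch and equals $\varphi(x)\,\Jmp_{\gamma_k}h(x)$ at $x=(\cF|_{\gamma_k})^{-1}y$. Substituting into $\ell_{k+1}(\cL h)$, changing variables $y=\cF x$ so that $dy=\Jac{\gamma_k}{\cF}(x)\,dx$, and inserting \eqref{eq:good-weights} for $\alpha_{k+1}(\cF x)$, the factors $\varphi(x)$ and $\Jac{\gamma_k}{\cF}(x)$ cancel exactly, leaving $\int_{\gamma_k}\alpha_k\,\Jmp_{\gamma_k}h\,dx=\ell_k(h)$.

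For the normalisation I would take $h_0=c\,\cL g$ with $g\in\cC^\infty(\cM)$ and $c\in\bC$ to be chosen. By Lemma~\ref{lem:struct-of-functions}, $\cL g$ is $\cC^1$ off $\Gamma_1$, so by \ref{it:A2} it is continuous across $\gamma_k$ for every $k\ge2$; thus $\ell_k(h_0)=0$ for all $k\ge2$, i.e.\ $\ell_{k+1}(h_0)=0$ for all $k\in\bN$. Since $\alpha_1\equiv1$, we have $\ell_1(h_0)=c\int_{\gamma_1}\Jmp_{\gamma_1}(\cL g)$, and the jump of $\cL g$ across $\gamma_1=F\gamma$ is produced by the genuine discontinuity of $\cF$ along $\gamma$ supplied by \ref{it:A0}; up to the appearing/disappearing branch it equals $(g\cdot\varphi)$ at the corresponding preimage in $\gamma$. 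As $\varphi$ is bounded away from zero, a positive bump $g$ supported near $\iota\gamma$ renders this integral nonzero, and $c$ is fixed to normalise $\ell_1(h_0)=1$.

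Finally I would prove the two growth bounds. Iterating \eqref{eq:good-weights} from $\alpha_1\equiv1$ expresses $\abs{\alpha_{k+1}}$ at $\cF^k x$ as $\abs{\varphi_k(x)\,\Jac{\gamma}{\cF^k}(x)}^{-1}$ up to a bounded factor that vanishes after taking $k$-th roots. For the $BV$ estimate I combine the trace inequality $\int_{\gamma_k}\abs{\Jmp_{\gamma_k}h}\,d\mathcal{H}^1\le C\norm{h}_{BV}$ with $\abs{\ell_k(h)}\le\norm{\alpha_k}_{L^\infty(\gamma_k)}\int_{\gamma_k}\abs{\Jmp_{\gamma_k}h}$, giving $\sup_{\norm{h}_{BV}\le1}\abs{\ell_k(h)}\le C\norm{\alpha_k}_{L^\infty(\gamma_k)}$, whose $k$-th root tends to $\Lambda_{BV}^{-1}$. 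For the $L^\infty$ estimate I use $\abs{\Jmp_{\gamma_k}h}\le2\norm{h}_{L^\infty}$, so $\sup_{\norm{h}_{L^\infty}\le1}\abs{\ell_k(h)}\le2\norm{\alpha_k}_{L^1(\gamma_k)}$; bounding this $L^1$ norm by $\len{\gamma_k}\norm{\alpha_k}_{L^\infty}$ yields the rate $\Lambda_{L^{\infty},2}^{-1}$, whereas the change of variables $y=\cF^{k-1}x$ inside $\int_{\gamma_k}\abs{\alpha_k}$ yields $\Lambda_{L^{\infty},1}^{-1}$, and retaining the smaller bound gives $\Lambda_{L^\infty}^{-1}=\max(\Lambda_{L^{\infty},1},\Lambda_{L^{\infty},2})^{-1}$. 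I expect the main obstacle to be the jump-transfer step: making rigorous, in the two-dimensional geometry, that exactly one preimage branch contributes to $\Jmp_{\gamma_{k+1}}(\cL h)$ with the stated Jacobian factor. This is precisely what \ref{it:A2}--\ref{it:A3} are designed to control, and is where the one-dimensional case, with isolated point singularities and no geometry, is genuinely simpler.
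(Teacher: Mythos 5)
Your proposal follows essentially the same route as the paper: the same jump functionals \(\ell_k(h)=\int_{\gamma_k}\alpha_k(x)\,\Jmp(h,x,v_k(x))\,dx\), the same use of \ref{it:A2}--\ref{it:A3} to push the jump of \(\cL h\) across \(\gamma_{k+1}\) back to \(\gamma_k\), the same choice \(h_0=c\,\cL g\) with a localized bump \(g\), and the same three estimates on \(\alpha_k\) yielding the \(BV\) and the two \(L^\infty\) rates. The only details you elide, and which the paper spends effort on, are that \(\cF|_{\gamma_k}\) need not be injective --- so the jump of \(\cL h\) at \(y\in\gamma_{k+1}\) is a \emph{sum} over preimages in \(\gamma_k\), which \ref{it:A1} is precisely designed to make compatible with the change of variables --- and that the bump \(g\) must be supported on one side of \(\gamma\) only, so that \(\Jmp(\cL g,\cdot,v_1)\) keeps a constant sign along \(\gamma_1\) and no cancellation can spoil \(\ell_1(h_0)\neq 0\).
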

\noindent
Section~\ref{sec:lin-funcs} is devoted to the proof of the above.

\begin{proof}[Proof of Theorem~\ref{thm:main}]
    The assumptions of the theorem imply that Proposition~\ref{prop:funct} holds.
    Consider first the case in which \({\norm{\cdot}}_{BV} \le {\norm{\cdot}}_{\fB}\).
    Let \(\overline{\fA}\) denote the completion of \(\fA\) with respect to \(\norm{\cdot}_{BV}\).
    By assumption, \(\cC^{\infty}(\cM) \subset \mathfrak{B}\) and \(\cL\) extends to a continuous operator on \(\mathfrak{B}\), and so \(\fA \subset \fB\).    
    Every complete subspace of a Banach space is a Banach space and so \((\overline{\fA}, \norm{\cdot}_{BV})\) is a Banach space.
    Furthermore, each of the statements of Proposition~\ref{prop:funct} concerning \(\ell_k\) extend to \(\overline{\fA}\) by continuity.
    In particular the estimate \eqref{eq:two-bounds} implies \ref{it:B1} for \(\Lambda = \Lambda_{BV}\).
    This means that the assumptions of Proposition~\ref{prop:abs} are satisfied with respect to  \((\overline{\fA}, \norm{\cdot}_{BV})\) and so the essential spectral radius of \(\cL : \overline{\fA} \to \overline{\fA}\) is at least \(\Lambda_{BV}\).
    Finally, since \(\overline{\fA} \subset \fB\), we use the fact that the essential spectral radius of \(\cL: \overline{\fA} \rightarrow \overline{\fA}\) is not greater than the essential spectral radius of \(\cL: \fB \rightarrow \fB\) (see~\cite[\S4]{BCJ22}).
    Consequently we conclude that the essential spectral radius of \(\cL: \fB \rightarrow \fB\)  is at least \(\Lambda_{BV}\).

    Alternatively, in the case in which \({\norm{\cdot}}_{L^\infty} \le {\norm{\cdot}}_{\fB}\), we let \(\overline{\fA}\) denote the completion of \(\fA\) with respect to \(\norm{\cdot}_{L^\infty}\) and proceed identically to obtain the analogous conclusion.
\end{proof}

\begin{remark}[observables]
    Understanding the statistical properties of \(\cC^\infty\) observables typically suffices, via approximation arguments, to determine the behaviour of less regular space of observables.
    For example, one could consider \(\cC^r\), Hölder, or indeed any spaced defined by fixing some modulus of continuity.
    In such a case the general results can be obtained by arguing about how ``rapidly'' the observables can be approximated by \(\cC^\infty\) ones (see e.g.  \cite[Corollary 2.3]{AM16} and \cite[Corollary 1]{Dolgopyat98}). The reference metric is the crucial detail we are fixing when we consider \(\cC^\infty(\cM)\) rather than the specific regularity.
\end{remark}

\begin{remark}[disc of spectrum]
    The proof of Theorem~\ref{thm:main}, together with the arguments in \cite[\S 4]{BCJ22}, implies a slightly stronger statement, namely that the complement of the unbounded component of the resolvent set contains a disc of radius \(\Lambda_{BV}\) or \(\Lambda_{L^{\infty}}\).
\end{remark}

\begin{remark}[partitions and artificial discontinuities]
    Often a convenient way to work with systems like we have here is to assume, by definition, that there exists a finite partition \(\Omega\) of a full measure subset of \(\cM\) such that, for each \(\omega \in \Omega\), \(\cF\) restricted to \(\omega\) has no critical points, is injective and admits a \(\cC^r\) extension to \(\overline{\omega}\). 
    We know that \(\Gamma \subset \bigcup_{\omega \in \Omega}\partial \omega \subset M\) but this might easily be a proper subset.
    We would like to assume that \(\Omega\) denotes the \emph{minimal partition} of \(\cM\) with the aforementioned properties.
    Unfortunately, for some systems, there might not be a unique such partition, e.g., a system which is smooth apart from a short ``slit'' as introduced in Section~\ref{sec:local-example}.
    In other words, working with such a definition can force artificial discontinuities to be introduced into the setting, an issue which we wish to avoid.
\end{remark}

\begin{remark}[comparison of estimates]
    \label{rem:compare}
    The different bounds~\eqref{eq:Lambda} are more or less useful in different settings, as illustrated by the following:
    If \(\len{\gamma_k}\) is uniformly bounded, as is the case for the cocycle examples of Section~\ref{sec:cocycle}, then \(\Lambda_{BV} = \Lambda_{L^\infty,2}\);
    If \(\Jac{\gamma}{\cF^{k}} \equiv 1\), as could be arranged for a cocycle example of Section~\ref{sec:cocycle} by choosing the fibre map \(S(x,y)\) to be a circle rotation, then \(\Lambda_{BV} = \Lambda_{L^\infty,1}\); 
    On the other hand, if there is contraction along \(\gamma_k\), for example a suitable piecewise smooth Anosov endomorphism, then \(\Lambda_{BV}\) will be strictly smaller than \(\Lambda_{L^\infty}\).
    However this framework is unsuitable for studying general systems with a contracting direction since one would need to consider the connection with anisotropic Banach spaces which permit distribution-like behaviour in the contracting direction. Nevertheless, we expect that many ideas developed in this work, particularly defining linear functionals by integration, will be useful for tackling the hyperbolic case. 
\end{remark}

\begin{remark}[Quasi-Hölder spaces]
Other than the BV spaces \cite{Liverani13, Thomine11}, relevant examples of Banach spaces used to prove a spectral gap for the transfer operator associated to multidimensional expanding maps with singularities are the quasi-H\"older spaces introduced in \cite{Saussol00}. 
By \cite[Proposition 3.4]{Saussol00} these spaces are continuously injected into \(L^{\infty}\), therefore Theorem \ref{thm:main} applies in this case with the second lower bound.
\end{remark}

\section{Discussion of the proper discontinuity assumptions}
\label{sec:discuss}

Differently from the one-dimensional case, where failing to be Markov (see~\cite{BCJ22} for further details) was sufficient to prove the lower bound on the essential spectral radius, in the present two-dimensional setting the geometry of the discontinuities led us to impose some extra assumptions. These assumptions were used to construct the linear functionals in Section~\ref{sec:lin-funcs} and are satisfied for three rather general examples presented in Section \ref{sec:examples}.
Here we discuss the distance between the proper discontinuity assumption (Definition~\ref{def:proper}) and failing to be Markov.
Let \(\cM\), \(\cF\) and \(\Gamma\) be as specified in Section~\ref{sec:results} but without requiring the existence of a proper discontinuity.

In the following we see that a combination of certain assumptions of a proper discontinuity implies that the \(\gamma_k\) curves are essentially pairwise disjoint.

\begin{lemma}
    \label{lem:disjoint-gamma}
    Suppose that, with respect to a curve \(\gamma \subset \Gamma\), \ref{it:A2} and \ref{it:A3} are satisfied.
    Then \(\gamma_j \cap \gamma_k \approxeq \emptyset\) whenever \(j,k\in\bN\), \(j \neq k\).
\end{lemma}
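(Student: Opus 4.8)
The plan is to prove this by a descent argument on the pair of indices, reducing the general case to the situation in which the smaller index equals $1$, where \ref{it:A2} applies directly. Throughout I work modulo sets of vanishing $1$-dimensional Hausdorff measure, using that $\cF$ restricted to any of the curves is, away from $\Gamma$, a $\cC^1$ diffeomorphism onto its image whose tangential Jacobian is bounded away from zero; consequently $\cF$ sends null subsets of a curve to null subsets and pulls them back to null subsets, so positive $\len{\cdot}$-measure is preserved in both directions. I would also record at the outset the inclusions $\gamma_k \subseteq \Gamma_k \subseteq \bigG$, which follow inductively from $\gamma_1 = F\gamma \subseteq \Gamma_1$ and $\gamma_{k+1} = \cF\gamma_k$ for $k \in \bN$.

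For the base case, fix $k \geq 2$. Since $\gamma_1 \subseteq \Gamma_1$, assumption \ref{it:A2} gives $\gamma_1 \cap \gamma_k \subseteq \Gamma_1 \cap \gamma_k \approxeq \emptyset$, so the claim holds whenever one of the two indices equals $1$.

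The heart of the argument is the descent step: if $2 \leq j < k$ and $\gamma_j \cap \gamma_k \not\approxeq \emptyset$, then $\gamma_{j-1} \cap \gamma_{k-1} \not\approxeq \emptyset$. To see this, set $A = \gamma_j \cap \gamma_k$, a set of positive measure, and let $B = \gamma_{j-1} \cap \cF^{-1}A$ be its preimage along $\gamma_{j-1}$. Because $\gamma_j = \cF\gamma_{j-1}$ and $\cF$ is non-singular on the curve, $B$ has positive measure and $\cF B \approxeq A$. On the one hand $B \subseteq \gamma_{j-1} \subseteq \bigG$; on the other hand $\cF B \approxeq A \subseteq \gamma_k = \gamma_{(k-1)+1}$, so $B \subseteq \cF^{-1}\gamma_{(k-1)+1} \cap \bigG$ up to a null set. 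Applying \ref{it:A3} with index $k-1$ yields $\cF^{-1}\gamma_{(k-1)+1} \cap \bigG \approxeq \gamma_{k-1}$, whence $B \subseteq \gamma_{k-1}$ modulo a null set. Combined with $B \subseteq \gamma_{j-1}$ this gives $B \subseteq \gamma_{j-1} \cap \gamma_{k-1}$ up to a null set, and this intersection has positive measure.

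Iterating the descent step starting from an arbitrary essential intersection $\gamma_j \cap \gamma_k$ with $1 \le j < k$, and decreasing both indices in tandem, produces after $j-1$ steps an essential intersection $\gamma_1 \cap \gamma_{k-j+1}$ with $k-j+1 \geq 2$, contradicting the base case; this forces $\gamma_j \cap \gamma_k \approxeq \emptyset$ for all $j \neq k$. The step I expect to require the most care is the measure bookkeeping inside the descent step: one must verify that pulling the positive-measure intersection $A$ back along $\gamma_{j-1}$ genuinely yields a positive-measure set $B$, rather than something that could collapse where $\cF$ fails to be injective or where $\gamma_{j-1}$ meets $\Gamma$, and that the successive $\approxeq$-relations compose without accumulating a set of positive measure.
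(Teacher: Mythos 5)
Your proof is correct and follows essentially the same route as the paper's: the paper proves the forward implication that $\gamma_j\cap\gamma_k\approxeq\emptyset$ propagates to $\gamma_{j+1}\cap\gamma_{k+1}\approxeq\emptyset$ via \ref{it:A3}, while you prove the contrapositive descent step, and both reduce to the case of index $1$ where \ref{it:A2} applies. Your extra care with the measure bookkeeping (positive measure of the pullback $B$, the inclusion $\gamma_{j-1}\subseteq\bigG$) is sound and makes explicit what the paper leaves implicit.
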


\begin{proof}
    We first show, as a consequence of \ref{it:A3}, that for all \(j,k \in \bN\),
    \begin{equation}
        \label{eq:disjoint-gamma}
        \gamma_{j} \cap \gamma_{k} \approxeq \emptyset
        \quad \text{implies that} \quad
        \gamma_{j+1} \cap \gamma_{k+1} \approxeq \emptyset.
    \end{equation}
    According to \ref{it:A3}, \(\cF^{-1}\gamma_{j+1} \cap \bigG \approxeq \gamma_{j}\).
    Let \(\tilde{\gamma}_{j+1} \subset \gamma_{j+1}\) satisfy \(\tilde{\gamma}_{j+1} \approxeq \gamma_{j+1}\) and \(\cF^{-1}\tilde{\gamma}_{j+1} \cap \bigG \subset \gamma_{j}\).
    In order to prove the above~\eqref{eq:disjoint-gamma}, it suffices to show that \(\gamma_{k+1} \cap \tilde{\gamma}_{j+1} \subset \cF(\gamma_k \cap \gamma_j)\).
    Let \(y \in \gamma_{k+1} \cap \tilde{\gamma}_{j+1}\).
    Since \(\gamma_{k+1} = \cF \gamma_k\) there exists \(x\in \gamma_k\) such that \(y = \cF x\).
    However, as observed above, \(\cF^{-1}\tilde{\gamma}_{j+1} \cap \gamma_k \subset \gamma_{j}\).
    We know that \(x \in \cF^{-1}\tilde{\gamma}_{j+1} \cap \gamma_k\) and so this shows that \(x\in \gamma_j\).
    This completes the proof of the above statement~\eqref{eq:disjoint-gamma}.

    We now take advantage of~\ref{it:A2}.
    Without loss of generality, suppose that \(j < k\).
    Iterating what we just proved~\eqref{eq:disjoint-gamma}, we know that,
    \begin{equation*}
        \gamma_{j} \cap \gamma_{k} \approxeq \emptyset
        \quad \text{whenever} \quad
        \gamma_{1} \cap \gamma_{k-j+1} \approxeq \emptyset.
    \end{equation*}
    Observe that \(k-j+1 \geq 2\) and that \(\gamma_1 \subset \Gamma_1\) and so, by~\ref{it:A2}, \(\gamma_{1} \cap \gamma_{k-j+1} \approxeq \emptyset\).
    Consequently we have shown that \(\gamma_j \cap \gamma_k \approxeq \emptyset\).    
\end{proof}

We recall that the map $\cF$ is said to be Markov if it admits a Markov partition.
Such a partition \(\Omega\) is a finite set of subsets of \(\cM\) which are each open, connected, have boundary consisting of a finite union of finite length \(\cC^1\) curves and together form a partition of a full measure subset of \(\cM\).
Moreover $\cF$ is $\cC^r$ and injective on each $\omega \in \Omega$ and $\omega \cap \omega' \neq \emptyset$ implies $\cF(\omega) \supset \omega'$, for each $\omega,\omega'  \in \Omega$.

\begin{lemma}
    \label{lem:not-Markov}
    Suppose that, with respect to \(\gamma \subset \Gamma\), \ref{it:A0}, \ref{it:A2} and \ref{it:A3} are satisfied and \(\len{\gamma_k}\) is uniformly bounded from below.
    Then \(\cF\) is not Markov.
\end{lemma}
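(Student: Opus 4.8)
The plan is to argue by contradiction. Suppose $\cF$ is Markov, with Markov partition $\Omega$, and write $E = \bigcup_{\omega\in\Omega}\partial\omega$ for the grid of the partition. Since $\Omega$ is finite and each $\partial\omega$ is a finite union of finite length $\cC^1$ curves, $E$ has finite $1$-dimensional Hausdorff measure, say $\mathcal{H}^{1}(E) = L < \infty$. The whole strategy is to show that the curves $\gamma_k$ all live inside the fixed finite-length grid $E$, while \ref{it:A2}, \ref{it:A3} and the length lower bound force their union to have infinite length.

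First I would record the key consequence of the covering condition: $\cF$ maps the grid into itself, $\cF E \approxeq E$ (understood branch-wise, via the extension $F$ on each $\overline{\omega}$, and modulo the finitely many points where grid curves meet). Indeed, the Markov property forces $\cF\omega$ to be a union of partition elements, so $F(\partial\omega) = \partial(\cF\omega) \subset E$ for each $\omega$; taking the union over $\omega\in\Omega$ gives the claim. Next I would prove, by induction on $k$, that $\gamma_k \subset E$ modulo a set of zero $1$-dimensional Hausdorff measure. For the base case I use \ref{it:A0}: since $\cF$ is genuinely discontinuous along $\iota\gamma$, the curve $\iota\gamma$ lies in the set where $\cF$ fails to be $\cC^r$, which for a Markov partition is contained in $E$; hence $\iota\gamma \subset E$ and, applying $F$, $\gamma_1 = F\gamma \subset E$. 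The inductive step is immediate from $\cF E \approxeq E$: a $\cC^1$ curve contained in the finite union of $\cC^1$ curves $E$ must, up to a finite set, lie along the edges of $E$, so that if $\gamma_k \subset E$ then $\gamma_{k+1} = \cF\gamma_k \subset E$ through the branches $F(\partial\omega)\subset E$. In particular $\bigcup_{k\in\bN}\gamma_k \subset E$ up to a set of zero $1$-dimensional Hausdorff measure.

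Finally I would combine the pieces. By Lemma~\ref{lem:disjoint-gamma}, whose hypotheses \ref{it:A2} and \ref{it:A3} are assumed here, the curves ${\{\gamma_k\}}_{k\in\bN}$ are pairwise essentially disjoint. Using the uniform lower bound $\len{\gamma_k} \geq c > 0$, for every $N$ we obtain $\mathcal{H}^{1}\big(\bigcup_{k=1}^{N} \gamma_k\big) = \sum_{k=1}^{N} \len{\gamma_k} \geq Nc$, whence $\mathcal{H}^{1}\big(\bigcup_{k\in\bN}\gamma_k\big) = \infty$. But $\bigcup_{k}\gamma_k \subset E$ up to zero measure, so $\mathcal{H}^{1}\big(\bigcup_k\gamma_k\big) \leq L < \infty$, a contradiction. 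Therefore $\cF$ is not Markov.

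The main obstacle is the inductive claim $\gamma_k \subset E$, specifically the justification of the pushforward along the grid. The two delicate points are (i) that the \emph{genuine} discontinuities of a Markov map must lie in the grid $E$ --- this is exactly where \ref{it:A0} is essential, since a merely cosmetic cut belonging to $\Gamma$ need not belong to any Markov grid --- and (ii) the bookkeeping modulo sets of zero $1$-dimensional Hausdorff measure when applying $\cF$ to curves that sit on the discontinuity set, where $\cF$ is defined only through its one-sided extensions $F$ and where grid curves may meet at finitely many points.
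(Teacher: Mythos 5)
Your proposal is correct and follows essentially the same route as the paper: trap the curves $\gamma_k$ inside the forward-invariant, finite-length grid $\partial\Omega$ of the hypothetical Markov partition, invoke Lemma~\ref{lem:disjoint-gamma} for essential pairwise disjointness, and contradict the uniform lower bound on $\len{\gamma_k}$. You merely spell out in more detail the base case (via \ref{it:A0}) and the measure-zero bookkeeping, which the paper compresses into the observation that $\gamma\subset\Gamma\subset\partial\Omega$ and $\cF(\partial\Omega)\subset\partial\Omega$.
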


\begin{proof}
    Suppose, for sake of a contradiction, that \(\cF\) is Markov and let \(\Omega\) denote the associated partition. 
    For convenience let \(\partial \Omega = \bigcup_{\omega \in \Omega} \partial \omega\).
    We know that \(\gamma \subset \Gamma \subset \partial \Omega\) since \(\cF\) is \(\cC^r\) on each \(\omega \in \Omega\).
    The Markov partition has the property that \(\cF (\partial \Omega) \subset \partial \Omega\) and so \(\gamma_k \subset \partial \Omega\) for all \(k\in \bN\).
    By Lemma~\ref{lem:disjoint-gamma} we know that \(\gamma_j \cap \gamma_k \approxeq \emptyset\) whenever \(j \neq k\).
    Since we assumed a uniform lower length for the \(\gamma_k\) we obtain a contradiction.
\end{proof}

Observe that assuming \(\cF\) to be expanding might not guarantee that \(\len{\gamma_k}\) is uniformly bounded from below since \({\left.\cF\right|_{\gamma_k}}\) need not be injective and indeed won't be in many basic examples (see Section~\ref{sec:cocycle} for such examples and \cite{Tsujii00} for an illustration of the singular possibilities of piecewise expanding maps).
Without modifying the argument, the assumption of the lemma can be weakened to requiring that \(\sum_{k} \len{\gamma_k}\) is a divergent series.
However this is still not sufficient to deal with all cases.

\subsection*{Assumption \ref{it:A0}}

In general, if the map has no discontinuity (or is Markov) and is \(\cC^r\) then the essential spectral radius can be made smaller and smaller for bigger and bigger $r$ (see e.g. \cite{CI91,GL03}). Consequently some assumption like \ref{it:A0} is required to obtain a result such as Theorem \ref{thm:main}.

Assumption \ref{it:A0} could be weakened to permit the higher dimensional version of the over-lapping mechanism which is seen in ``W-maps''~\cite[\S6]{Keller82} and unimodal expanding maps~\cite{BS08}.
These exhibit behaviour typically seen with discontinuities whilst being continuous.\footnote{The reason is that, since these maps preserve orientation on one side and reverse orientation on the other side, the push-forward of a smooth density will have jump discontinuities.}
It appears that the results obtained in this work would continue to hold for such a weakened assumption.

\subsection*{Assumption \ref{it:A1}}

Suppose that \(\gamma \subset \Gamma\) is a curve and that \(\cF : \gamma_{k-1} \to \gamma_{k}\) is injective, with the exception of a countable set of points, for every \(k\in\bN\). 
Assumption \ref{it:A1} can easily be satisfied by choosing \(\alpha_0 \equiv 1\) and then defining the other \(\alpha_k\) inductively.
On the other hand, when \(\cF : \gamma_{k-1} \to \gamma_{k}\) is many-to-one, then we require the correct relationship between \(\varphi\) and \(\Jac{\gamma_k}{\cF}\) at preimages else it would be impossible to satisfy the required relationship~\eqref{eq:good-weights}.
It is unreasonable to imagine that such a problem won't exist for some  map which fails to be Markov and yet satisfies all other properties.

\subsection*{Assumptions \ref{it:A2}, \ref{it:A3}}

Recall that \ref{it:A2} specified that
\(\gamma_k \cap \Gamma_1 \approxeq \emptyset \) for all \(k\geq 2\) and that \ref{it:A3} specified that \(\cF^{-1}\gamma_{k+1} \cap \bigG\approxeq \gamma_{k}\) for all \(k\in\bN\).
Lemma~\ref{lem:not-Markov} tells us that this, together with \ref{it:A0} and a lower bound on \(\len{\gamma_k}\), these assumptions suffice to imply that \(\cF\) is not Markov.
These assumptions are not as tight as desired, as illustrated with the following example.
This is the same possibility that already exists in the one-dimensional case \cite{BCJ22}.
E.g., suppose that \(f : [0,1] \to [0,1]\) is an expanding interval map and that \(f\) is discontinuous at two distinct points \(a,b \in [0,1]\) which satisfy \(f(a^{+}) = f(b^{+})\) but that \(\{ f^n (a^{+})\}_{n\in\bN} \) is an infinite set of points. 
In such a way the required assumption holds for the second iterate, although not from the very first. 
A two-dimensional example can then be built from this as a product or cocycle (as per Section~\ref{sec:cocycle}).

\section{Essential spectrum: an abstract result}
\label{sec:ess-spec}

In this section we prove Proposition~\ref{prop:abs}. 
Suppose that \(\lambda \in \bC\), \(\abs{\lambda} < \Lambda\) and define, 
\begin{equation*}
    \Xi_{\lambda} = \sum_{k=1}^{\infty}\lambda^k \ell_{k}.    
\end{equation*}
Take \(\underline{\Lambda} \in (|\lambda|, \Lambda)\). Assumption~\ref{it:B1} implies that there exists \(C > 0\) such that \(\abs{\ell_k(h)} \leq C \underline{\Lambda}^{-k} \norm{h}\) for all \(k\in \bN\), \(h\in \fB\).
Consequently
\[
    \abs{\Xi_{\lambda} (h)} \leq C \sum_{k=1}^{\infty} \abs{\lambda}^k   \underline\Lambda^{-k} \norm{h}
\]
and so \(\Xi_{\lambda}\) defines a linear functional on \(\fB\).
By \ref{it:B2}, \(\Xi_{\lambda} (h_{0}) =  \lambda \ell_{1}(h_{0}) = \lambda \) and so \(\Xi_{\lambda}\) is non-zero.
We also define the rank-one operator \(\cK : \fB \to \fB\),
\begin{equation}%
    \label{eq:def-K}
    \cK h = \ell_{1}(L h) \, h_{0}.
\end{equation}
Using assumption~\ref{it:B3} we calculate that,
\[
    \begin{aligned}
        L^{*} \Xi_{\lambda}
         & = \sum_{k=1}^{\infty}\lambda^k L^{*} \ell_{k}  = \lambda L^{*} \ell_{1} + \sum_{k=2}^{\infty}\lambda^k \ell_{k-1}             \\       
         & = \lambda L^{*} \ell_{1} + \lambda \sum_{k=1}^{\infty}\lambda^k \ell_{k} = \lambda L^{*} \ell_{1} + \lambda \ \Xi_{\lambda}.     
    \end{aligned}
\]
Since \(\Xi_{\lambda} (h_{0}) = \lambda\), we know that, for all \(h\in \fB\), \(\Xi_{\lambda} (\cK h) = \lambda \ell_{1}(L h)\).
I.e., \(\cK^{*} \Xi_{\lambda} = \lambda L^{*} \ell_{1}\).
Consequently, we have shown that,
\[
    (L - \cK)^*\Xi_{\lambda} = \lambda \ \Xi_{\lambda},
\]
whenever \(\abs{\lambda} < \Lambda\).
This means that the essential spectral radius of \(({  L}-\cK) : \fB \to \fB\) is at least\footnote{Indeed, the spectrum of $L-\cK$ equals that of $(L-\cK)^*$. Therefore, since the set $\{\lambda \in \bC : |\lambda|<\Lambda\}$ is not discrete, it is contained in the essential spectrum of $L-\cK$.} \(\Lambda\) and, since compact perturbations don't change the essential spectral radius, the same holds for \(L\). 
\qed

\section{Linear functionals}
\label{sec:lin-funcs}

In this section we construct a sequence of linear functionals and hence prove Proposition~\ref{prop:funct}.
Let \(\cF:\cM\to \cM\), \(\varphi : \cM \to \bC\) be fixed for the remainder of this section, satisfying the assumptions of the proposition. 
In particular this means that \(\cF\) has a proper discontinuity \(\gamma\) such that \ref{it:A0}, \ref{it:A1}, \ref{it:A2} and \ref{it:A3} are each satisfied with respect to the curves \(\{\gamma_k\}\) and associated \(\{\alpha_k\}\).
The proof will consist of defining the linear functionals and then proving that the required properties are satisfied.

For each \(h \in \fA\), let us define the ``jump'' of \(h\) at a point \(x\in \cM\) in the direction \(v \in \cT_x \cM\) as,%
\footnote{Here, \(\exp_x(v)\) denotes the exponential map based at \(x\), i.e., the application that sends \(v\) to \(\eta(1)\), where \(\eta\) is the geodesic with \(\eta'(x) = v\).}%
\[
    \Jmp(h, x, v) 
    = \lim_{\epsilon \rightarrow 0}
    \left[h\left(\exp_{x}(\epsilon v)\right) - h\left(\exp_{x}(-\epsilon v)\right) \right].
\]
Recall that \(\gamma \subset \Gamma \subset M\) is part of the boundary of the manifold \(M\).
For \(x \in \gamma\) let \(v(x) \in \cT_x M\) denote the unit vector which is orthogonal to \(\gamma\) and such that \(\exp_x(\epsilon v)\) is in \(M\) for small \(\epsilon>0\).
For each \(k\in \bN\), \(y \in \gamma_k\) let \(v_k(y) \in \cT_y \cM\) be defined as \(v_k(y) = D\cF^k(x) \ v(x)\) where \(y = \cF^k x\).
For each \(k\in \bN\), \(h\in \fA\), we define\footnote{If \(h: \gamma \to \bC\) and \(\gamma \subset \cM\) is a \(\cC^1\) curve, we systematically write \(\int_{\gamma} A(x) \ dx\) as shorthand for \(\smash{\int_{0}^{\len{\gamma}}} A(x(t)) \ dt\) with the understanding that \(x : [0,\abs{\gamma}] \to \gamma\) is a parametric representation of the curve \(\gamma\) which respects arc length.} the linear functional \(\ell_k : \fA \rightarrow \bR\),
\begin{equation}
    \label{eq:def-lin-func}
    \ell_{k}(h) = \int_{\gamma_{k}} \alpha_{k}(x)\Jmp(h, x, v_k(x)) \ dx. 
\end{equation}
In the remainder of this section we prove the required properties of these linear functionals.

\begin{proof}[{Proof of Proposition~\ref{prop:funct}, part 1}]
 	 Before dividing into cases, we will prepare a couple of estimates which follow directly from  \ref{it:A1}.
  
   Let  \(x \in \gamma_1\), \(k\in\bN\) and \(y = \cF^k x \in \gamma_{k+1}\).
    Iterating the assumed relationship \ref{it:A1} and recalling that  \(\alpha_1 \equiv 1\) implies that 
    \begin{equation}
        \label{eq:iterate-A1}
        \alpha_{k+1}(y)
        = \varphi_k(x)^{-1} \Jac{\gamma_1}{\cF^k}(x)^{-1}.
    \end{equation}
    It may happen that a given \(y\in \gamma_{k+1}\) has more than one preimage in \(\gamma_1\).
    However, the assumed relationship \ref{it:A1} means that we are working in the case where the term \(\varphi_k(\cdot) \Jac{\gamma_1}{\cF^k}(\cdot)\) is equal at each such preimage, as implied by this calculation.
    The above \eqref{eq:iterate-A1} implies that,
    \begin{equation}  
        \label{eq:alpha-est-A}
        \sup_{y\in \gamma_{k+1}} \abs{\alpha_{k+1}}(y)
        \leq \left(\inf_{x \in \gamma_1} \left| \varphi_k(x) \cdot \Jac{\gamma_1}{\cF^k}(x) \right|\right)^{-1}.
    \end{equation}
    For convenience, let \(\tilde\gamma_1 \subset\gamma_1\) be such that \(\cF^k\) is injective from \(\tilde\gamma_1\) to \(\gamma_{k+1}\). 
    Using again the above formula \eqref{eq:iterate-A1},
    \[
        \begin{aligned}
            \int_{\gamma_{k+1}} \abs{\alpha_{k+1}(y)} \ dy  
             & =   \int_{\tilde\gamma_{1}} \abs{\alpha_{k+1}(\cF^k x)} \Jac{\gamma_1}{\cF^k}(x) \ dx            \\
             & \leq \left( \inf_{x \in \gamma_1} \abs{\varphi_k(x)} \right)^{-1}  \int_{\tilde\gamma_{1}} \ dx.
        \end{aligned}
    \]
    Consequently, observing that \(\len{\tilde\gamma_{1}} \leq \len{\gamma_1}\), we have shown that 
    \begin{equation}  
        \label{eq:alpha-est-B}
        \int_{\gamma_{k+1}} \abs{\alpha_{k+1}(y)} \ dy  
        \leq \left( \inf_{x \in \gamma_1} \abs{\varphi_k(x)} \right)^{-1} \len{\gamma_1}.
    \end{equation}
    Having prepared the above estimates, we now divide the argument into the three cases \eqref{eq:Lambda}.
    
    \noindent
    \textbf{Case 1:}
    Suppose that \({\|h\|}_{BV} \le {\|h\|}_{\fB}\),  \(\forall h \in \fA\).
    The control given by the \(BV\) norm implies that there exists%
    \footnote{See e.g., \cite[Definition 3.67, Theorem 3.86]{AFP00}. For this it is essential that the linear function is defined by integrating along a curve rather than considering jumps at points.}
    \(C>0\) such that, for each $k\in \bN$, \(h\in BV(\cM)\),
    \begin{equation}
        \label{eq:BV-est}
        \int_{\gamma_k} \abs{\Jmp(h,x,v_k(x))} \ dx \le C \norm{h}_{BV}\le C \norm{h}_{\fB}.  
    \end{equation}
    In particular, the bound does not depend on the length of \(\gamma_k\).
    Since,
    \[
        \begin{aligned}
            \abs{\ell_{k+1}(h)} & = \abs{\int_{\gamma_{k+1}} \alpha_{k+1}(x)\Jmp(h, x, v_{k+1}(x)) \ dx}                            \\
                                & \leq \sup_{y\in \gamma_{k+1}} \abs{\alpha_{k+1}}(y)  \int_{\gamma_k} \abs{\Jmp(h,x,v_k(x))} \ dx,
        \end{aligned}
    \]
    the above estimates \eqref{eq:BV-est}, \eqref{eq:alpha-est-A} imply that 
    \[
        \abs{\ell_{k+1}(h)}
        \leq C \left(\inf_{x \in \gamma_1} \left| \varphi_k(x) \cdot \Jac{\gamma_1}{\cF^k}(x) \right|\right)^{-1}  \norm{h}_{\fB}.
    \]
    
    \noindent
    \textbf{Case 2:}
    Suppose that \({\|h\|}_{L^{\infty}} \le {\|h\|}_{\mathfrak{B}}\), \(\forall h \in \fA\).
    We take advantage of the trivial estimate,
    \begin{equation}
        \label{eq:trivial-est}
        \left|\Jmp(h,x,v_k(x)) \right| \le 2 {\|h\|}_{L^{\infty}} \leq 2  \norm{h}_{\fB}. 
    \end{equation}
    Since
    \[
        \begin{aligned}
            \abs{\ell_{k+1}(h)} & = \abs{\int_{\gamma_{k+1}} \alpha_{k+1}(x)\Jmp(h, x, v_{k+1}(x)) \ dx}                                                 \\
                                & \leq  \int_{\gamma_{k+1}} \abs{\alpha_{k+1}(y)} \ dy  \ \ \sup_{x\in \gamma_{k+1}} \left|\Jmp(h,x,v_{k+1}(x)) \right|,
        \end{aligned}
    \]
    we use the above \eqref{eq:alpha-est-B} and obtain the estimate,
    \[
        \abs{\ell_{k+1}(h)}
        \leq  2 \len{\gamma_1} \left( \inf_{x \in \gamma_1} \abs{\varphi_k(x)} \right)^{-1} \norm{h}_{\fB}.
    \]
    This corresponds to \(\Lambda_{L^{\infty},1}(\cF,\varphi, \gamma)\) in \eqref{eq:Lambda}.
    
    \noindent
    \textbf{Case 3:}
    Again suppose that \({\|h\|}_{L^{\infty}} \le {\|h\|}_{\mathfrak{B}}\), \(\forall h \in \fA\).
    Estimating as above~\eqref{eq:trivial-est},
    \[
        \begin{aligned}
            \abs{\ell_{k+1}(h)} & = \abs{\int_{\gamma_{k+1}} \alpha_{k+1}(x)\Jmp(h, x, v_{k+1}(x)) \ dx}             \\
                                & \leq 2 \int_{\gamma_{k+1}} \abs{\alpha_{k+1}(y)} \ dy \ {\|h\|}_{\mathfrak{B}}.
        \end{aligned}
    \]
    Using also the prior estimate \eqref{eq:alpha-est-A},
    \[
        \abs{\ell_{k+1}(h)} \leq 2
        \left(\inf_{x \in \gamma_1} \left| \varphi_k(x) \cdot \Jac{\gamma_1}{\cF^k}(x) \right|\right)^{-1} \ \len{\gamma_{k+1}}  {\|h\|}_{\mathfrak{B}}.
    \]
    This corresponds to \(\Lambda_{L^{\infty},2}(\cF,\varphi, \gamma)\) in \eqref{eq:Lambda}.
\end{proof}

Before proving the next claim of Proposition~\ref{prop:funct} we will introduce a few further lemmas.

\begin{lemma}
    \label{lem:preimages}
    The cardinality of \(\cF^{-1}y\) is bounded, uniformly for \(y\in\cM\).
\end{lemma}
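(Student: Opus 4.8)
The plan is to transfer the question to the compact manifold-with-boundary \(M\) and its \(\cC^r\) extension \(F\), where non-singularity of the derivative makes \(F\) a local diffeomorphism, and then exploit compactness. First I would observe that, since \(\iota\) restricted to the interior \(\cM\setminus\Gamma\) is injective and \(F = \cF\circ\iota\) there, every \(x\in\cF^{-1}y\) corresponds to a distinct point of \(F^{-1}y\); hence \(\#\cF^{-1}y \le \#F^{-1}y\) and it suffices to bound \(\#F^{-1}y\) uniformly in \(y\). This is the advantageous formulation because \(M\) is compact, whereas \(\cM\setminus\Gamma\) is not, and because \(F\) is \(\cC^r\) with \(DF\) non-singular on all of \(M\): the determinant is bounded away from zero on the interior \(\cM\setminus\Gamma\), and by continuity of \(DF\) (here \(r\ge 1\)) this persists up to \(\partial M\).

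Next I would establish a \emph{uniform local injectivity radius}: there exists \(\delta>0\) such that \(F\) is injective on \(B(x,\delta)\cap M\) for every \(x\in M\). Pointwise local injectivity at each \(x\) follows from the inverse function theorem, since \(DF(x)\) is non-singular. To upgrade this to a uniform radius I would argue by contradiction: were no \(\delta\) to work, then for each \(n\) there would be points \(x_n\neq x_n'\) with \(d(x_n,x_n')<2/n\) and \(F(x_n)=F(x_n')\); by compactness of \(M\) a subsequence of \((x_n)\) converges to some \(x\), whence \(x_n'\to x\) as well, so for large \(n\) both \(x_n,x_n'\) lie in a neighbourhood of \(x\) on which \(F\) is injective, contradicting \(F(x_n)=F(x_n')\) with \(x_n\neq x_n'\).

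Finally, invoking compactness once more, I would cover \(M\) by finitely many balls \(B(x_1,\delta),\dots,B(x_N,\delta)\). By the injectivity just established, each ball contains at most one point of \(F^{-1}y\); assigning to every preimage a ball containing it therefore yields an injection of \(F^{-1}y\) into \(\{1,\dots,N\}\). Thus \(\#F^{-1}y\le N\) for all \(y\in\cM\), and consequently \(\#\cF^{-1}y\le N\).

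The step I expect to be the main obstacle is the uniform injectivity radius near \(\partial M\): the inverse function theorem has to be applied to \(F\) as a map of a manifold with boundary, so one must phrase local injectivity in half-space coordinates (or embed \(M\) into a slightly larger open manifold to which \(F\) extends with non-singular derivative, and run the argument there). Once uniformity is secured, both the covering estimate and the transfer back to \(\cF\) through \(\iota\) are routine.
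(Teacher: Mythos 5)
Your proof is correct and follows essentially the same route as the paper's: a finite cover of the domain by sets on which the map is injective, so that the number of preimages is bounded by the number of sets in the cover. The only difference is one of care rather than strategy — you pass to the compact manifold \(M\) and prove the uniform local injectivity radius explicitly via a compactness--contradiction argument, whereas the paper works directly with the finitely many connected components of \(\cM\setminus\Gamma\) and simply asserts that a sufficiently small diameter \(\delta\) makes \(\cF\) injective on each piece of the cover.
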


\begin{proof}
    Let \(\{U_j\}_j\) be the connected components of \(\cM \setminus \Gamma\).
    This is a finite set since \(\Gamma\) is the union of a finite number of smooth curves of a finite length.
    Let \(\delta>0\).
    For each \(j\), let \(\{V_{j,k}\}_k\) be a finite cover of \(U_j\) comprising of subsets of \(U_j\) of diameter less than \(\delta\).
    Since the derivative of \(\cF\) is bounded we may choose \(\delta\) sufficiently small such that \(\left.\cF\right|_{V_{j,k}}\) is 1-to-1.
    Consequently the cardinality of \(\cF^{-1}y\) is bounded by the cardinality of \(\{V_{j,k}\}_{j,k}\).
\end{proof}

Let \(\sigma \subset \Gamma \subset \cM\) denote the finite set of points of intersection of the curves in \(\Gamma\).
Consequently, for any point in \(\Gamma \setminus \sigma\), there exists a neighbourhood such that the intersection of this neighbourhood with \(\Gamma\) is a curve.

\begin{lemma}
    \label{lem:first-image}
    Let \(y \in \cM\) and set\footnote{The finite set of preimages of \(y\), divided into those in \(\partial M\) and those in the interior.} \(\cA_1(y) = \partial M \cap F^{-1} y\) and \(\cA_2(y) = F^{-1} y \setminus \cA_1(y) \).
    There exists \(U\), a neighbourhood of \(y\),
    and, for \(x \in \cF^{-1} y\), there exist mutually disjoint neighbourhoods\footnote{Although \(V_x \subset M\), writing \({\left.\cF\right|}_{V_x}\) we consider \(V_x\) as a subset of \(\cM\), discarding \(\partial M\).} \(V_x\) of \(x\), \(\cF : V_x \to U\) injective,
    such that, for all \(h\in \fA\), on \(U \setminus \Gamma_1\),
    \begin{equation}
        \label{eq:precise}
        \cL h = 
        \sum_{x \in \cA_2(y)} \left(\varphi \cdot h \right)\circ {\left.\cF\right|}_{V_x}^{-1}
        + \sum_{x \in \cA_1(y)} \left(\varphi \cdot h \right)\circ {\left.\cF\right|}_{V_x}^{-1}
        \cdot  \mathbf{1}_{\cF V_x}.
    \end{equation}
    Moreover, if \(y \in \gamma_1 \setminus F (\iota^{-1} \sigma)\), there exists \(x_0 \in \cA_1(y)\) such that \(\tilde{x}_0 \notin \cA_1(y)\) where \(\tilde{x}_0 \in \partial M\) denotes\footnote{I.e., \(\tilde{x}_0 \in M\) denotes the point that is equal to \(x_0\) as points in \(\cM\) yet distinct in \(M\).} the point such that \(x_0 \neq \tilde{x}_0\) yet \(\iota(x_0) = \iota(\tilde{x}_0)\).
\end{lemma}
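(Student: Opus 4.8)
The plan is to reduce the computation of $\cL h$ near $y$ to a finite, geometrically controlled list of local inverse branches of $\cF$, one attached to each preimage of $y$. First I would record that $F^{-1}y$ is a finite set: the covering argument proving Lemma~\ref{lem:preimages} applies verbatim to the $\cC^r$ map $F$ on the compact manifold-with-boundary $M$, since its derivative is uniformly non-singular and hence $F$ is locally injective, including at boundary points. Enumerating $F^{-1}y = \cA_1(y) \cup \cA_2(y)$, I would use this local injectivity to choose, for each $x \in F^{-1}y$, a neighbourhood $V_x$ of $x$ in $M$ on which $F$ is injective; as the preimages are finitely many distinct points, I would shrink the $V_x$ to be mutually disjoint. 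For $x \in \cA_2(y)$ the point $x$ is interior, so $V_x$ is a full neighbourhood mapping diffeomorphically onto a full neighbourhood of $y$; for $x \in \cA_1(y) \subset \partial M$ the set $V_x$ is a one-sided (half-disk) neighbourhood, and $F(V_x \setminus \partial M) = \cF V_x$ is an open region of $\cM$ lying on one side of the curve $F(\partial M \cap V_x) \subset \Gamma_1$.

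The central step is to choose $U$ so that every preimage of every nearby point is already accounted for by the $V_x$. I would argue by a compactness argument: if no neighbourhood $U$ of $y$ satisfied $F^{-1}(U) \subset \bigcup_{x \in F^{-1}y} V_x$, there would be $y_n \to y$ with preimages $x_n \in M \setminus \bigcup_x V_x$; by compactness of $M$ a subsequence of $x_n$ converges to some $x_* \in M$ with $F(x_*) = y$, forcing $x_* \in F^{-1}y$ and hence $x_n \in V_{x_*}$ for large $n$, a contradiction. Fixing such a $U$ (further shrunk to lie inside $F(V_x)$ for the interior branches), I would then read off $\cL h(y')$ for $y' \in U \setminus \Gamma_1$ directly from the definition~\eqref{eq:def-L}: each $x' \in \cM \setminus \Gamma$ with $\cF x' = y'$ lies in exactly one $V_x$; an interior branch contributes $(\varphi \cdot h)\circ (\cF|_{V_x})^{-1}$ for every $y' \in U$, while a boundary branch contributes the same expression precisely when $y'$ lies in the one-sided region $\cF V_x$ and contributes nothing otherwise (its formal preimage sitting in $\partial M$, outside the domain of $\cF$). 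This is exactly the factor $\mathbf{1}_{\cF V_x}$, and assembling the two families of branches yields the stated identity~\eqref{eq:precise} on $U \setminus \Gamma_1$.

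For the final assertion, suppose $y \in \gamma_1 \setminus F(\iota^{-1}\sigma)$. Since $\gamma_1 = F\gamma$ there is $x_0 \in \gamma \subset \partial M$ with $F(x_0) = y$, so $x_0 \in \cA_1(y)$; moreover $\iota(x_0) \notin \sigma$, for otherwise $y = F(x_0) \in F(\iota^{-1}\sigma)$. Writing $p = \iota(x_0)$, the exclusion $p \notin \sigma$ guarantees, as in the setup preceding the lemma, that a neighbourhood of $p$ meets $\Gamma$ in a single $\cC^1$ curve, so $\iota^{-1}(p)$ consists of exactly the two points $x_0, \tilde{x}_0$ and the two one-sided $\cC^r$ extensions of $\cF$ at $p$ take the values $F(x_0)$ and $F(\tilde{x}_0)$. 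Assumption \ref{it:A0} states that $\cF$ is discontinuous at $p \in \iota\gamma$, which forces these one-sided limits to differ, i.e. $F(\tilde{x}_0) \neq F(x_0) = y$. Hence $\tilde{x}_0 \notin F^{-1}y \supset \cA_1(y)$, which is the required conclusion.

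I expect the genuine obstacle to be the middle paragraph: making precise, via the preimage-count together with the compactness argument, that no spurious branches appear and that the boundary branches are captured exactly by one-sided indicators, together with verifying that $\cF V_x$ really is one side of a piece of $\Gamma_1$ so that $\mathbf{1}_{\cF V_x}$ is unambiguous on $U \setminus \Gamma_1$. The remaining bookkeeping (disjointness of the $V_x$, finiteness of $F^{-1}y$, and the two-preimage structure of $\iota$ away from $\sigma$) should be routine.
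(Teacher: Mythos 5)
Your proof is correct and takes essentially the same route as the paper's: the identity \eqref{eq:precise} is read off from the definition of \(\cL\) via local inverse branches (the paper merely notes that the indicator can be dropped for interior preimages by shrinking \(U\), while you additionally spell out the compactness argument producing \(U\) and the disjoint \(V_x\)), and the final claim is deduced from \ref{it:A0} exactly as in the paper, only phrased directly (the two one-sided values \(F(x_0)\), \(F(\tilde{x}_0)\) must differ) rather than by contradiction.
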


\begin{proof}
    The form of the transfer operator follows from the definition \eqref{eq:def-L}, the only thing to note is that the term \( \mathbf{1}_{\cF V_x}\) can be removed from the first sum.
    This is because \(x \in \cA_2(y)\) means that \(\cF V_x\) is a neighbourhood of \(y\) (as a point in \(\cM\)). 
    Consequently, since \(\cA_2(y)\) is a finite set, \(U\) can be chosen so that \(\cF V_x\) covers \(U\) for all \(x \in \cA_2(y)\).
    
    The final statement follows from assumption \ref{it:A0}.
    Suppose this statement were false, then, for any \(x_0 \in \cA_1(y)\), there exists \(\tilde{x}_0 \in \cA_1(y)\) such that \(x_0\) and \(\tilde{x}_0\) are equal as points in \(\cM\).
    Let \(x=\iota (x_0) = \iota (\tilde{x}_0) \in \cM\) and let \(W_{y}\subset\cM\) be a neighbourhood of \(y=\cF x\).
    Notice that any neighbourhood of \(x\) is covered by \(\iota(W_{x_0} \cup W_{\tilde{x}_0})\) where \(W_{x_0} \subset M\), \(W_{\tilde{x}_0} \subset M\) are neighbourhoods of \(x_0\), \(\tilde{x}_0\) respectively. 
    Shrinking the neighbourhoods we can guarantee that \(\cF(\iota(W_{x_0} \cup W_{\tilde{x}_0})) \subset W_{y}\).
    Consequently \(\cF\) is continuous at \(x\) in contradiction of \ref{it:A0}.
\end{proof}

The following result concerns the structure of elements of \(\fA\).

\begin{lemma}
    \label{lem:struct-of-functions}
    \(\cL^n g\) is uniformly \(\cC^1\) on \(\cM \setminus \bigcup_{p=1}^{n} \Gamma_p\) for all \(n \in \bN_{0}\), \(g \in \cC^{\infty}(\cM)\).
\end{lemma}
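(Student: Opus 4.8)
The plan is to argue by induction on $n$, using the local formula for $\cL$ from Lemma~\ref{lem:first-image} together with the elementary observation that $\Gamma_{p+1} = \cF \Gamma_p$ for every $p \geq 1$ (which follows directly from the definitions $\Gamma_1 = F\Gamma$ and $\Gamma_{j+1} = \cF^j\Gamma_1$). The base case $n = 0$ is immediate, since $\cL^0 g = g \in \cC^\infty(\cM)$ is uniformly $\cC^1$ on the compact manifold $\cM$ and the excluded set $\bigcup_{p=1}^0 \Gamma_p$ is empty. For the inductive step I set $h = \cL^n g$, which by hypothesis is uniformly $\cC^1$ on $\cM \setminus \bigcup_{p=1}^n \Gamma_p$, and must show $\cL h = \cL^{n+1} g$ is uniformly $\cC^1$ on $\cM \setminus \bigcup_{p=1}^{n+1} \Gamma_p$. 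It suffices to produce, for each fixed $y^* \in \cM \setminus \bigcup_{p=1}^{n+1}\Gamma_p$, a neighbourhood on which $\cL h$ is $\cC^1$, and then to check that the resulting bounds are uniform in $y^*$.

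First I would fix such a $y^*$ and apply Lemma~\ref{lem:first-image} at $y^*$. Because $y^* \notin \Gamma_1$ and $\Gamma_1$ is closed, the neighbourhood $U$ may be shrunk so that $U \cap \Gamma_1 = \emptyset$; in particular $\cA_1(y^*) = \partial M \cap F^{-1} y^* = \emptyset$, since any boundary preimage $x$ would force $y^* = F x \in \Gamma_1$. Consequently the sum in \eqref{eq:precise} carrying the indicator $\mathbf{1}_{\cF V_x}$ is empty, and the formula collapses on $U$ to the finite sum $\cL h = \sum_{x \in \cA_2(y^*)} (\varphi \cdot h) \circ \cF|_{V_x}^{-1}$ over the interior preimages $x \in \cA_2(y^*) \subset \cM \setminus \Gamma$.

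The heart of the argument is then to verify that each summand is $\cC^1$ near $y^*$. On each $V_x$ the inverse branch $\cF|_{V_x}^{-1}$ is a $\cC^r$ diffeomorphism (the derivative of $\cF$ is uniformly non-singular), and $\varphi$ is $\cC^1$ on $V_x \subset \cM \setminus \Gamma$, so it remains only to know that $h$ is $\cC^1$ near $x = \cF|_{V_x}^{-1}(y^*)$, i.e.\ that $x \notin \bigcup_{p=1}^n \Gamma_p$. This is exactly where the choice of excluded set pays off: if $x \in \Gamma_p$ for some $1 \le p \le n$, then, since $x \in \cM \setminus \Gamma$ and $\Gamma_{p+1} = \cF \Gamma_p$, we would get $y^* = \cF x \in \Gamma_{p+1}$ with $2 \le p+1 \le n+1$, contradicting $y^* \notin \bigcup_{q=2}^{n+1}\Gamma_q$. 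Hence $\varphi \cdot h$ is $\cC^1$ near each such $x$, and composing with the $\cC^1$ inverse branch and summing the finitely many terms shows $\cL h$ is $\cC^1$ near $y^*$.

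Finally I would upgrade this local regularity to uniform $\cC^1$ control. The number of terms in the sum is bounded independently of $y^*$ by Lemma~\ref{lem:preimages}; uniform non-singularity of $D\cF$ bounds the $\cC^1$ norms of the inverse branches; $\varphi$ is bounded with bounded derivative on $\cM \setminus \Gamma$; and the inductive hypothesis provides a uniform $\cC^1$ bound for $h$. Combining these through the chain and product rules yields a bound on $\sup\abs{\cL h} + \sup\abs{D(\cL h)}$ over $\cM \setminus \bigcup_{p=1}^{n+1}\Gamma_p$ that depends only on the analogous data for $h$, closing the induction. I expect the main obstacle to be precisely this bookkeeping near the discontinuity set: confirming that shrinking $U$ legitimately removes the boundary and indicator contributions of \eqref{eq:precise}, and that excluding exactly $\bigcup_{p=1}^{n+1}\Gamma_p$ downstairs is both necessary and sufficient to place every surviving preimage in the region where $h$ is already smooth, all while keeping the estimates uniform as $y^*$ approaches $\Gamma_1$.
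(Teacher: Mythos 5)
Your proof is correct and follows essentially the same route as the paper's: induction on $n$, the local formula of Lemma~\ref{lem:first-image} with the $\cA_1$ sum discarded because $y\notin\Gamma_1$, and the observation that every surviving preimage avoids $\bigcup_{p=1}^{n}\Gamma_p$ since $y\notin\bigcup_{p=2}^{n+1}\Gamma_p=\cF\bigl(\bigcup_{p=1}^{n}\Gamma_p\bigr)$. Your closing paragraph on uniformity of the bounds is slightly more explicit than the paper's, but the substance is identical.
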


\begin{proof}
    The case \(n=0\) is immediate.
    Let us now suppose the statement is true for \(n\) and consider the \(n+1\) case. 
    Let \(h=\cL^n g\) and so \(\cL^{n+1}g = \cL h\).
    By the inductive assumption, we know that \(h\) is uniformly \(\cC^1\) on \(\cM \setminus \bigcup_{p=1}^{n} \Gamma_p\).
    Consider \(y \notin \bigcup_{p=1}^{n+1} \Gamma_p\).
    To complete the proof it suffices to show that \(\cL h\) is uniformly \(\cC^1\) in a neighbourhood of any such \(y\).
    Using Lemma~\ref{lem:first-image}, there exists \(U\), a neighbourhood of \(y\) and, for each \(x\in \cF^{-1}y\), a neighbourhood \(V_x\) such that, on \(U\),
    \[
        \cL h = 
        \sum_{x\in \cF^{-1}y} \left(\varphi \cdot h \right)\circ {\left.\cF\right|}_{V_x}^{-1}
    \]
    where \(\cF : V_x \to U\) is bijective for each \(x\).
    We can consider this reduced version of the formula~\eqref{eq:precise} because of the following.
    Since \(y \notin \Gamma_1\) the second sum from the lemma, the one corresponding to \(\cA_1(y)\), can be omitted and we can assume that \(U \subset \cM \setminus \Gamma_1\).
    Moreover, shrinking the neighbourhoods as required, we may choose \(V_x \subset \cM \setminus \bigcup_{p=1}^{n} \Gamma_p\) since \(y \notin \cF (\bigcup_{p=1}^{n} \Gamma_p) = \bigcup_{p=2}^{n+1} \Gamma_p\).
    As such, \(\varphi \cdot h \) is uniformly \(\cC^1\) on each \(V_x\).
    Finally observe that \(\cF^{-1} : U \to V_x\) is uniformly \(\cC^1\) since \(\cF\) is \(\cC^r\) and uniformly non-singular.
\end{proof}

\begin{remark}
    Since \(M\) is a manifold with boundary, the boundary is locally homeomorphic to the half-space \(\{(x,y)\in\bR^2 : y \geq 0\}\).
    We also know that \(F : M \to \cM\) is \(\cC^r\).
    If \(V \subset M\) is a small neighbourhood of an interior point \(x \in M\) then \(F V \subset \cM\) will be a neighbourhood of \(F x\).
    On the other hand, if  \(W \subset M\) is a small neighbourhood of a boundary point \(x \in M\) then \(F W \subset \cM\) won't be a neighbourhood of \(F x\), it will only be ``half''.
\end{remark}

\begin{proof}[{Proof of Proposition~\ref{prop:funct}, part 2}]
    We will construct \(h_{0} \in \fA \) such that 
    \begin{equation}
        \label{eq:begin}
        \ell_{1}(h_{0}) = 1
        \quad \text{and} \quad 
        \ell_k(h_{0}) = 0 \text{ for all \(k \geq 2\)}.
    \end{equation}
    Let \(h \in \cC^{\infty}(\cM)\), to be chosen shortly, and let \(h_0 = \cL h\). 
    By Lemma~\ref{lem:struct-of-functions}, \(\cL h\) is \(\cC^1\) on  \(\cM \setminus \Gamma_1\).
    Whenever \(k\geq 2\), assumption \ref{it:A2} tells that \(\gamma_k \cap \Gamma_1 \approxeq \emptyset\).
    Consequently, \(\Jmp(\cL h, x, v_{k}(x))  = 0\) for all \(x\) in a full measure subset of \(\gamma_k\).
    We have thereby shown that \(\ell_k(\cL h) = 0\) whenever \(k \geq 2\).
    It remains to fix \(h_0\) and show that \(\ell_1(h_0) = 1\).

    Fix \(y \in \gamma_1 \setminus F (\iota^{-1} \sigma)\).
    Lemma~\ref{lem:first-image} tells us that, in \(U\setminus \Gamma_1\), where \(U\) is a neighbourhood of a point \(y_0 \in \gamma_1\),
    \[
        \cL h = 
        \sum_{x \in \cA_2} \left(\varphi \cdot h \right)\circ {\left.\cF\right|}_{V_x}^{-1}
        + \sum_{x \in \cA_1} \left(\varphi \cdot h \right)\circ {\left.\cF\right|}_{V_x}^{-1}
        \cdot  \mathbf{1}_{\cF V_x}.
    \]
    The first sum of terms contributes nothing to \(\Jmp(\cL h, y, v_{1}(y))\) because they are all continuous in \(U\) (recall that \(h \in \cC^{\infty}(\mathcal{M})\)). 
    Let \(x_0 \in \cA_1 \subset M\), \(x_0 \in \gamma\), be as given by Lemma~\ref{lem:first-image}.
    We ensure that \(y_0\) was chosen such that the intersection of \(V_{x_0}\) with \(\Gamma\) is a simple curve, a subset of \(\iota \gamma\). 
    We choose \(h \in \cC^{\infty}(\cM)\) such that \(\varphi \cdot h\) is positive in \(V_{x_0}\), the neighbourhood of \(\iota x_0 \in \cM\), and zero elsewhere. 
    We may assume that this neighbourhood is sufficiently small that it does not intersect any other curves in \(\Gamma\) except for \(\gamma\). This means that \({\Jmp(\cL h, \cF x_0, v_{1}(\cF x_0))}  = {\varphi(x_0)h(x_0)}\) (or \(-\varphi(x_0)h(x_0)\)). 
    Consequently \( \Jmp(\cL h, y, v_{1}(y))\) is positive (or negative, but say positive) for a positive measure subset of \(y \in \gamma_1\), given by \( \gamma_1 \cap \cF V_{x_0}\). 
    Furthermore, we can even ensure that \( \Jmp(\cL h, y, v_{1}(y))\) is everywhere non-negative  in \(\gamma_1\) since, by eventually making \(\varphi \cdot h\) positive in even a smaller neighbourhood of \(x_0\) and zero elsewhere, we can consider points in \(\gamma_1\) coming from just one side of \(\Gamma\) (so that \({\Jmp(\cL h, y, v_{1}(y))}\) does not change sign varying \(y \in \gamma_1\)). 
    This in turn implies that \(\ell_{1}(\cL h)\) is non-zero.
    Scaling appropriately, we insure that \(\ell_{1}(h_0) = 1\).    
\end{proof}

It remains to prove the final claim of Proposition~\ref{prop:funct} which, together with Lemma~\ref{lem:first-image}, makes use of the following two results.

\begin{lemma}
    \label{lem:jump-image}
    Let \(h \in \fA\). For any \(k \in \bN\), there exists \(\widetilde{\gamma}_k\) with  \(\widetilde{\gamma}_k \approxeq \gamma_k\) such that for all \(x \in \widetilde{\gamma}_k\) and any vector \(v \in \cT_{x}\cM\), \(v\) not parallel to \(\gamma_{k}'(x)\) and \(v \neq 0\), we have
    \begin{equation}
        \label{eq:sub-lemma}
        \lim_{\epsilon \to 0^+} h\circ \cF \left(\exp_{x}(\epsilon v)\right)
        = \lim_{\epsilon \to 0^+} h\left(\exp_{\cF x}(\epsilon \overline{v})\right),
    \end{equation}
    where \(\overline{v} = D_x\cF v \in \cT_{\cF x}\cM\).
\end{lemma}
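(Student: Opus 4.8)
The plan is to reduce to a single generator and then argue by the chain rule, transporting one-sided limits through $\cF$. Since $\fA = \mathrm{span}(\{\cL^n(\cC^\infty(\cM))\})$, both sides of \eqref{eq:sub-lemma} are linear in $h$, and a finite intersection of co-null subsets of $\gamma_k$ is again co-null, it suffices to treat $h = \cL^n g$ with $g \in \cC^\infty(\cM)$ and then intersect the resulting sets over the finitely many terms of a given $h$. For such $h$, Lemma~\ref{lem:struct-of-functions} gives that $h$ is $\cC^1$ on $\cM \setminus \bigcup_{p=1}^n \Gamma_p$, so every discontinuity of $h$ lies on $\bigG$, and $h$ is $\cC^1$ up to $\gamma_{k+1}$ from either side away from the points where several curves of $\bigG$ meet.

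Next I would record the transverse structure at a good point $x \in \gamma_k$. Since $\gamma_{k+1} = \cF\gamma_k$ and $D_x\cF$ is nonsingular, $D_x\cF$ sends $\gamma_k'(x)$ to a nonzero multiple of $\gamma_{k+1}'(\cF x)$; hence $v \not\parallel \gamma_k'(x)$ forces $\overline v = D_x\cF v$ to be transverse to $\gamma_{k+1}$ at $\cF x$. I would then fix $\widetilde\gamma_k \subset \gamma_k$ by discarding the points $x$ where $\cF$ is not a $\cC^1$ local diffeomorphism on the side into which $v$ points, where $\cF x$ meets a branch of $\bigG$ other than $\gamma_{k+1}$, or where the transverse ray from $\cF x$ returns to $\bigG$ arbitrarily quickly. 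On the complement, $\bigG$ is locally just $\gamma_{k+1}$ near $\cF x$, so $h$ admits a one-sided $\cC^1$ extension from each side of $\gamma_{k+1}$, and the right-hand limit of \eqref{eq:sub-lemma} is the boundary value of the extension from the side into which $\overline v$ points.

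Finally I would match the two sides. The curve $c(\epsilon) = \cF(\exp_x(\epsilon v))$ is $\cC^1$ for small $\epsilon > 0$, and by the chain rule $c'(0) = D_x\cF \cdot \frac{d}{d\epsilon}\big|_{0}\exp_x(\epsilon v) = D_x\cF v = \overline v$. Thus $c$ is tangent to $\overline v$ at $\cF x$ and, being transverse to $\gamma_{k+1}$, lies for small $\epsilon > 0$ on the same side of $\gamma_{k+1}$ as the geodesic $\epsilon \mapsto \exp_{\cF x}(\epsilon\overline v)$. Since $h$ extends continuously up to $\gamma_{k+1}$ from that side, $h(c(\epsilon))$ converges to the same boundary value, which is exactly the left-hand side of \eqref{eq:sub-lemma}; this gives the claimed equality on $\widetilde\gamma_k$.

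The chain-rule matching of the third paragraph is routine; the main obstacle is the nullity of the discarded set in the second, i.e.\ showing $\widetilde\gamma_k \approxeq \gamma_k$. This is where the geometry of the discontinuities bites: one must control any overlap of $\gamma_k$ with the discontinuity set $\Gamma$ (so that $\cF x$ and $D_x\cF$ are read through the one-sided extension $F$) and rule out that other branches of $\bigG$ cross or accumulate along $\gamma_{k+1}$ on a set of positive length. Assumptions \ref{it:A2} and \ref{it:A3}, together with Lemma~\ref{lem:disjoint-gamma} (essential disjointness of the $\gamma_j$) and the uniform bound on preimages from Lemma~\ref{lem:preimages}, are precisely what make this exceptional set have zero $1$-dimensional Hausdorff measure.
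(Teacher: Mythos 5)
Your first and third paragraphs match the paper: reduce by linearity to $h=\cL^n g$, note that transversality to $\gamma_k$ is pushed forward by the nonsingular $D_x\cF$ to transversality to $\gamma_{k+1}$, and conclude because the curve $\epsilon\mapsto\cF(\exp_x(\epsilon v))$ and the geodesic $\epsilon\mapsto\exp_{\cF x}(\epsilon\overline v)$ are tangent at $\cF x$ and hence enter the same component of a small neighbourhood split by $\gamma_{k+1}$. The problem is the second paragraph and your closing claim about why the discarded set is null: you define the exceptional set relative to the \emph{full} singular set $\bigG$ (``where $\cF x$ meets a branch of $\bigG$ other than $\gamma_{k+1}$, or where the transverse ray from $\cF x$ returns to $\bigG$ arbitrarily quickly''). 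That set is not small in general. In the paper's own first example (Section~\ref{sec:affinemap}) the curves $\Gamma_j$ are lines of infinitely many distinct slopes and $\bigG$ is dense in $\bT^2$, so the transverse ray from \emph{every} point of $\gamma_{k+1}$ returns to $\bigG$ arbitrarily quickly; your $\widetilde\gamma_k$ would be empty, not co-null. Moreover the tools you invoke cannot repair this: \ref{it:A2}, \ref{it:A3} and Lemma~\ref{lem:disjoint-gamma} only give measure-zero \emph{pairwise} intersections among the $\gamma_j$ (and say nothing about the other components of $\Gamma$ and their images inside $\bigG$), which is far weaker than the local separation you need to take one-sided limits, and Lemma~\ref{lem:preimages} (a bound on $\#\cF^{-1}y$) is irrelevant to the nullity of this set.

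The idea you are missing is that the exceptional set is allowed to depend on $h$, and for a fixed $h=\cL^n g$ Lemma~\ref{lem:struct-of-functions} localises the discontinuities of $h$ to the \emph{finite} union $\bigcup_{p=1}^{n}\Gamma_p$, not all of $\bigG$. The paper therefore calls $y\in\gamma_{k+1}$ regular if some neighbourhood $U_\eta(y)$ satisfies $U_\eta(y)\cap\bigcup_{p=1}^{n}\Gamma_p=\gamma_{k+1}$; since this is a finite union of finite-length $\cC^1$ curves, only a negligible set of points of $\gamma_{k+1}$ fails to be regular, and $\widetilde\gamma_k$ is taken to be the preimage of the regular points. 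With that replacement (finite union $\bigcup_{p\le n}\Gamma_p$ in place of $\bigG$, and $\widetilde\gamma_k$ depending on $n$), your argument goes through and coincides with the paper's; as written, the nullity claim at the heart of your proof is false.
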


\begin{proof}
    Suppose that \(h = \cL^n g\) for some \(g\in \cC^\infty(\cM)\).
    Consider the two paths,
    \[
        \rho(\epsilon) =  \cF \left(\exp_{x}(\epsilon v)\right),
        \quad \quad
        \tilde{\rho}(\epsilon) = \exp_{\cF x}(\epsilon \overline{v}).
    \]
    We have to show that \(\lim_{\epsilon \rightarrow 0}h \circ \rho (\epsilon)\) is equal to \(\lim_{\epsilon \rightarrow 0}h \circ \tilde{\rho} (\epsilon)\).
    Observe that \(\rho(0) = \tilde{\rho}(0) = \cF x\) and the two paths are tangent at \(0\). 
    We still need to work a little because \(h\) might fail to be continuous at any point in \(\bigcup_{j=1}^n\Gamma_j\).
    For the purpose of this proof, we say that a point \(y\in \gamma_{k+1}\) is a \emph{regular} point if there exists \(\eta>0\) such that the \(\eta\)-size neighbourhood of \(y\), \(U_\eta(y)\subset \cM\) satisfies \(U_\eta(y) \cap \bigcup_{j=1}^n\Gamma_j = \gamma_{k+1}\).
    Since \(\bigcup_{j=1}^n\Gamma_j\) is composed of a finite number of finite length curves, there can be at most a finite number of points which fail to be regular.
    Let \(\widetilde{\gamma}_k \subset \gamma_k\) be the set of all  points \(x\) such that \(y = \cF x \in \gamma_{k+1}\) is regular.
    By the above considerations, \(\widetilde{\gamma}_k \approxeq \gamma_k\).
    Observe that \(U_{\eta}(\cF x) \setminus \gamma_{k+1}\) consists of two connected components, the boundary between the two given by the curve \(\gamma_{k+1}\).
    By Lemma~\ref{lem:struct-of-functions}, \(h\) is continuous in each of these two components.     
    Now, the fact that \(v \in \cT_{x}\cM\) is not parallel to \(\gamma_k'(x)\), implies that \(\overline{v}\) is not parallel to \(\gamma_{k+1}'(\cF x)\) and the ranges of \(\rho(\epsilon)\) and, for all \(\epsilon\) small enough, \(\widetilde \rho (\epsilon)\) belong to the same connected component of \(U_{\eta}(\cF x)\) on which \(h\) is continuous. 
    Consequently the two limits coincide.
\end{proof}

\begin{lemma}
    \label{lem:shift-heart}
    For all \(k\in\bN\) there exists \(\widetilde{\gamma}_{k+1} \approxeq \gamma_{k+1} \) such that for all \(y \in \widetilde{\gamma}_{k+1} \), \(h \in \fA\),
    \[
        \Jmp(\cL h, y, v_{k+1}(y)) 
        = \sum_{\substack{x \in \gamma_k, \\ \cF x = y}}\varphi (x) \Jmp(h,x,v_{k}(x)).
    \]
\end{lemma}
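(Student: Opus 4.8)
The plan is to compute $\Jmp(\cL h, y, v_{k+1}(y))$ at a generic $y\in\gamma_{k+1}$ by substituting the local form of $\cL h$ from Lemma~\ref{lem:first-image} and tracking which preimages of $y$ contribute. First I would fix $\widetilde{\gamma}_{k+1}$ to be the set of $y \in \gamma_{k+1}$ such that: (i) $y \notin \Gamma_1$, which removes only a null set by \ref{it:A2} since $k+1 \ge 2$; (ii) $\cF^{-1}y \cap \bigG \subset \gamma_k$, again up to a null set by \ref{it:A3}; and (iii) $y$ is a regular point in the sense of Lemma~\ref{lem:jump-image}. For (iii) note that the discontinuity set of $h=\cL^n g$ is contained in $\bigcup_{p=1}^{n}\Gamma_p$ \emph{independently of} $g\in\cC^\infty(\cM)$, so for each $n$ the non-regular points form a finite set, and their union over $n\in\bN$ is still null. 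Hence $\widetilde{\gamma}_{k+1}\approxeq\gamma_{k+1}$ and the three conditions hold simultaneously for all $h\in\fA$.

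\textbf{No boundary preimages; the local formula.}
Since $y\notin\Gamma_1$, there can be no preimage on $\partial M$: a preimage $x\in\cA_1(y)$ would force $y=Fx\in F\Gamma=\Gamma_1$. Thus $\cA_1(y)=\emptyset$, the indicator terms in \eqref{eq:precise} are absent, and Lemma~\ref{lem:first-image} yields, on a neighbourhood $U$ of $y$ with $U\cap\Gamma_1=\emptyset$,
\[
    \cL h = \sum_{x \in \cF^{-1}y} (\varphi \cdot h)\circ {\left.\cF\right|}_{V_x}^{-1},
\]
with each ${\left.\cF\right|}_{V_x} : V_x \to U$ a diffeomorphism. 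Consequently $\Jmp(\cL h, y, v_{k+1}(y))$ is the sum over preimages $x$ of the jumps of the individual summands.

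\textbf{Covariance of the jump under the local inverse.}
For each preimage $x\in\gamma_k$ I would argue that the corresponding summand contributes exactly $\varphi(x)\,\Jmp(h,x,v_k(x))$. As $x$ is an interior preimage, $\iota x\notin\Gamma$, so $\varphi$ is continuous at $x$ and factors out of the jump as $\varphi(x)$. It remains to identify the jump of $h\circ{\left.\cF\right|}_{V_x}^{-1}$ at $y$ along $v_{k+1}(y)$ with $\Jmp(h,x,v_k(x))$. Here one uses $v_{k+1}(y)=D\cF(x)\,v_k(x)$, so that $D({\left.\cF\right|}_{V_x}^{-1})(y)\,v_{k+1}(y)=v_k(x)$ and the two one-sided approaches to $y$ along $\pm v_{k+1}(y)$ pull back to the two one-sided approaches to $x$ along $\pm v_k(x)$. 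Because $v_k(x)$ is transverse to $\gamma_k$ (the image under the non-singular $D\cF^k$ of a normal to $\gamma$), Lemma~\ref{lem:jump-image}, together with the one-sided $\cC^1$ regularity of $h$ on each side of $\gamma_k$ from Lemma~\ref{lem:struct-of-functions}, is exactly the tool that matches the one-sided limit of $h\circ{\left.\cF\right|}_{V_x}^{-1}$ at $y$ with the one-sided limit of $h$ at $x$; it guarantees that the pulled-back path stays in the same continuity component of $h$ as the geodesic. This gives the covariance $\Jmp(h\circ{\left.\cF\right|}_{V_x}^{-1}, y, v_{k+1}(y)) = \Jmp(h,x,v_k(x))$.

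\textbf{Discarding the remaining preimages and conclusion.}
Finally, for a preimage $x\notin\gamma_k$ condition (ii) gives $x\notin\bigG$, where $h$ is $\cC^1$ by Lemma~\ref{lem:struct-of-functions}; hence $h$ is continuous at $x$ and its summand contributes nothing to the jump. Only the preimages in $\gamma_k$ survive, so
\[
    \Jmp(\cL h, y, v_{k+1}(y)) = \sum_{\substack{x \in \gamma_k,\\ \cF x = y}} \varphi(x)\,\Jmp(h, x, v_k(x)),
\]
as required. The main obstacle is the covariance step: the jump is a difference of one-sided limits of a \emph{discontinuous} function, so one must check via Lemma~\ref{lem:jump-image} that composing with ${\left.\cF\right|}_{V_x}^{-1}$ does not cross $\gamma_k$ and thereby send the two one-sided limits to the wrong sides; the transversality of $v_k(x)$ to $\gamma_k$ is precisely what rules this out.
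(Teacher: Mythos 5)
Your proof is correct and follows essentially the same route as the paper: the exceptional set is carved out using \ref{it:A2} and \ref{it:A3}, the local formula of Lemma~\ref{lem:first-image} is applied with \(\cA_1(y)=\emptyset\), preimages outside \(\gamma_k\) are discarded because \(h\) is \(\cC^1\) off \(\bigG\), and Lemma~\ref{lem:jump-image} supplies the covariance of the one-sided limits. Your explicit remark that the non-regular points depend only on \(n\) (not on \(g\)), so that \(\widetilde{\gamma}_{k+1}\) can be chosen uniformly over \(h\in\fA\), is a small but welcome refinement of a point the paper's proof leaves implicit.
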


\begin{proof}
    Let \(h = \cL^n g\) for some \(g\in \cC^\infty(\cM)\).
    By Lemma~\ref{lem:struct-of-functions}, \(h\) is uniformly \(\cC^1\) on \(\cM \setminus \bigcup_{p=1}^{n} \Gamma_p\).
    By \ref{it:A3},
    \(\cF^{-1}\gamma_{k+1} \cap \bigG \approxeq \gamma_{k}\).
    Additionally, by \ref{it:A2},
    \(\gamma_{k+1} \cap \Gamma_1 \approxeq \emptyset\).
    Consequently, let \(\widetilde{\gamma}_{k+1} \approxeq \gamma_{k+1}\) denote the subset of \(\gamma_{k+1}\) such \(\cF^{-1}{\widetilde\gamma}_{k+1} \cap \bigG \subset \gamma_{k}\) and \(\widetilde\gamma_{k+1} \cap \Gamma_1 = \emptyset\).
    
    Applying Lemma~\ref{lem:first-image}, for a point \(y\in \widetilde{\gamma}_{k+1} \subset \gamma_{k+1}\), we obtain neighbourhoods which, following the notation of the lemma, are denoted \(U\), \(\{V_x\}\), such that 
    \(\Jmp(\cL h, y, v_{k+1}(y))\) is equal to the limit as \(\epsilon \to 0\) of
    \begin{multline}
        \label{eq:jump-func}
        \sum_{x \in \cF^{-1}y}
        \Bigl[ \left(\varphi \cdot h\right)
        \circ {\left.\cF\right|}_{V_x}^{-1}\left(\exp_{y}(\epsilon v_{k+1}(y))\right) 
        \cdot \mathbf{1}_{\cF V_x}\left(\exp_{y}(\epsilon v_{k+1}(y))\right)\\
        - \left(\varphi \cdot h\right)
        \circ {\left.\cF\right|}_{V_x}^{-1}\left(\exp_{y}(-\epsilon v_{k+1}(y))\right) 
        \cdot \mathbf{1}_{\cF V_x}\left(\exp_{y}(-\epsilon v_{k+1}(y))\right)\Bigr].
    \end{multline}
    Recall the notation from before (Lemma~\ref{lem:first-image}), that 
    \(\cA_1 = \partial M \cap \cF^{-1} y\) and \(\cA_2 = \cF^{-1} y \setminus \cA_1 \).
    As mentioned above, by \ref{it:A2} we know that
    \(\widetilde\gamma_{k+1} \cap \Gamma_1 \approxeq \emptyset\) 
    and so, in this case, \(\cA_1\) is an empty set and we need only to consider \(\cA_2\).
    This means that, for all \(x\in \cF^{-1}y\), \(\cF V_x\) is a neighbourhood of \(y\) so the indicator functions in the above sum can be discarded. 

    Since \(\cF^{-1}{\widetilde\gamma}_{k+1} \cap \bigG \subset \gamma_{k}\), if \(x\in \cF^{-1}\widetilde{\gamma}_{k+1}\) then \(x\in\gamma_k\) or \(x \notin \widehat{\Gamma}\). 
    In the above~\eqref{eq:jump-func} all the summands are zero, except the ones corresponding to \(x \in \gamma_k\).
    This is because, if \(x \notin \widehat{\Gamma}\), then \(\varphi \cdot h\) is \(\cC^1\) in a neighbourhood of \(x\) and so the positive and negative part of the summand cancel out.
    Here we have used the fact that  \(h\) is \(\cC^1\) on \(\cM \setminus \bigcup_{p=1}^{n} \Gamma_p\) and \(x \notin \bigcup_{p=1}^{n} \Gamma_p\).
    
    Taking advantage of Lemma~\ref{lem:jump-image}, \( \Jmp(\cL h, y, v_{k+1}(y))\) is equal to, for all \(y \in \widetilde{\gamma}_{k+1}\), 
    \begin{equation}
        \label{eq:one-term}
        \sum_{\substack{x \in \gamma_k \\ \cF x = y}} \lim_{\epsilon \rightarrow 0^+}
        \left[  (\varphi \cdot h) \left(\exp_x(\epsilon v_{k}(x))\right)
            - (\varphi \cdot h) \left(\exp_x(-\epsilon v_{k}(x))\right)\right].
    \end{equation}
    We conclude since \(\varphi\) is continuous in \(x\) (with the exception of at most a finite set of points, which we can always remove by redefining \(\widetilde{\gamma}_{k+1}\)). 
    Indeed, since
    \(\tilde{\gamma}_{k+1} \cap \Gamma_1 = \emptyset\), for all \(y \in \widetilde{\gamma}_{k+1}\), the above~\eqref{eq:one-term} is equal to
    \[
        \begin{aligned}
             & \sum_{\substack{x \in \gamma_k                     \\ \cF x = y}}\varphi(x)  \lim_{\epsilon \rightarrow 0^+} \biggl ( h \left(\exp_x(\epsilon v_{k}(x))\right)  
            -  h\left(\exp_x(-\epsilon v_{k}(x))\right) \biggr )  \\
             & \quad \quad \quad \quad \quad = \sum_{\substack{x \in \gamma_k \\ \cF x = y}}\varphi(x)  \Jmp(h, x, v_{k}(x))
        \end{aligned}
    \]
    as required by the statement of the lemma.
\end{proof}

\begin{proof}[{Proof of Proposition~\ref{prop:funct}, part 3}]
    Recalling the definition \eqref{eq:def-lin-func} of the linear functionals \(\ell_k\), Lemma~\ref{lem:shift-heart} implies that
    \[
        \begin{aligned}
            \ell_{k+1} (\cL h)
             & = \int_{\gamma_{k+1}} \alpha_{k+1}(y)\Jmp(\cL h, y, v_{k+1}(y)) \ dy      \\
             & = \int_{\gamma_{k+1}}  \alpha_{k+1}(y) \sum_{\substack{x \in \gamma_k, \\ \cF x = y}} \ \varphi(x) \ \Jmp(h, x, v_{k}(x)) \ dy.
        \end{aligned}
    \]
    Changing variables, the above is equal to   
    \[
        \int_{\gamma_k}\alpha_{k+1}(\cF x) \ \varphi(x) \ \Jmp(h, x, v_{k}(x))\ \Jac{\gamma_k}{\cF}(x) \ dx.
    \]
    The relationship \eqref{eq:good-weights} for the \(\alpha_k\) \ref{it:A1} means that, 
    \[
        \ell_{k+1}(\cL h) = \int_{\gamma_k}\alpha_{k}(x)  \ \Jmp(h, x, v_{k}(x)) \ dx = \ell_{k}(h),
    \]
    as required for the final claim of Proposition~\ref{prop:funct}.
\end{proof}

\section{Examples}
\label{sec:examples}

In this section we discuss some examples for which our results apply, namely we exhibit maps admitting a proper discontinuity for which Theorem~\ref{thm:main} holds.
In the first example, concerning two-dimensional piecewise affine expanding maps, we can say more: our lower bound for the essential spectral radius coincides with the upper bound known in the literature.

\subsection{Piecewise affine expanding maps}
\label{sec:affinemap}

Let \(\beta \in (1,\infty)\) be non-algebraic and \(\cM = \bT^2\). 
Let \(\cF_A: \cM \rightarrow \cM\) be defined through its action on the fundamental domain \([0,1)^2\),
\begin{equation}
    \label{eq:cF_A}
    \cF_A \left(x,
            y\right) = 
                \left( [\beta x + y],[ 2y]\right) ,
\end{equation}
where we write \([x] = x - \lfloor x \rfloor\) for the fractional part of any real number \(x\).
Let \(\Gamma = \{0\}\times \bT^1\). 
Let \(\varphi: \mathcal{M} \rightarrow \bC\) be as per Theorem~\ref{thm:main}, i.e., \(\cC^1\) on \(\cM \setminus \Gamma\), uniformly bounded and uniformly bounded away from zero.
Note that \(\cF_A\) is \(\cC^{\infty}\) on \(\cM \setminus \Gamma\), but for any \(y \in \bT^1\),
\begin{equation}
    \label{eq:not-equal}
    \lim_{x \rightarrow 1^{-}}\cF_A (x,y) \neq \lim_{x \rightarrow 0^{+}}\cF_A (x,y).    
\end{equation}
According to Section~\ref{sec:results}, we consider the extension of \(\cF_A\), denoted \(F_A:M \rightarrow \cM\), where \(M = [0,1] \times \bT^1\).
Furthermore, \(M\) is a smooth manifold whose boundary is \(\partial M = \gaml \cup \gamr\), where
\[
    \gaml = \left\{0\right\}\times \bT^1, \quad \gamr= \left\{1\right\} \times \bT^1.
\] 
Note that, doing so, we have ``doubled" the discontinuity set such that the extension is well defined. 
For all \(k \ge 1\) we set $\widetilde\beta_k=\frac 12\sum_{\ell=0}^{k-1}\big( \frac{\beta}{2}\big)^{\ell}$ and \(\widetilde \beta_0 = 0\). 
Observe that \(\{\widetilde\beta_k\}_{k}\) is a strictly increasing sequence and, in particular, that \(\widetilde\beta_k \neq \widetilde\beta_{k'}\) whenever \(k\neq k'\).
Next, let \(v = \left(\begin{smallmatrix}0\\1\end{smallmatrix}\right)\) be the unit tangent vector to the curve \(\gamr\) (or \( \gaml\)). 
One has that\footnote{Here we systematically write \(D\cF_A^{k}v\) to mean \(D\cF_A^{k-1} DF_A v\), \(\cF_A^k\gamr\) to mean \(\cF_A^{k-1}F_A\gamr\) and similar for \(\gaml\).}  
\[
    w_k = D\cF_A^{k}v = \begin{pmatrix}
        \beta^k &  2^{k}\tilde\beta_k \\
        0       &  2^k
    \end{pmatrix}\begin{pmatrix}0 \\
        1
    \end{pmatrix} = \begin{pmatrix}2^k \widetilde \beta_k \\
        2^k
    \end{pmatrix},
\]
is tangent to \(\cF_A^k\gamr\) (or \(\cF_A^k\gaml\)).  
This shows that both \(\cF_A^k \gamr\) and \(\cF_{A}^k\gaml\) are unions of straight lines with slopes \(\widetilde \beta^{-1}_k\). 
Note also that, for all \(x \in \cF^k_A\gamr\), 
\begin{equation}
    \label{eq:phi-det-A}
    \Jac{ \cF^k_A\gamr}{\cF}(x) = \frac{\|D\cF_A w_{k}\|}{\|w_{k}\|}.
\end{equation}

\begin{lemma}
    \label{lem:images}
    \mbox{}\\
    \noindent
    (i) For each $k \in \bN$ and \(a \in \cF_A^k\gamr\), there exist \( t\in [0,1],\{n_{j}\}_{j=0}^{k-1}, m \in \bZ\)  such that
    \[
        a = \left( \beta^k + \textstyle\sum_{j=0}^{k-1}\beta^j 2^{k-1-j}t + \sum_{j=0}^{k-1}\beta^j n_j, \ 2^k t + m \right).
    \]
    (ii) For each \(k  \in \bN\) and \(b \in \cF_A^k\gaml\), there exist \(\bar t \in [0,1]\), \(\{\bar n_{j}\}_{j=0}^{k-1}, \bar m \in \bZ\) such that
    \[
        b = \left( \textstyle \sum_{j=0}^{k-1}\beta^j 2^{k-1-j}\bar t + \sum_{j=0}^{k-1}\beta^j \bar n_j, \ 2^k \bar t + \bar m \right).
    \]
\end{lemma}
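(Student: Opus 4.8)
The plan is to prove both formulas by induction on $k$, since $\cF_A$ is affine on each piece and iterating the linear action on tangent vectors is already controlled by the matrix computation preceding the lemma. First I would treat part (i), and then part (ii) will follow by the same argument with the sole difference being the absence of the leading $\beta^k$ term (reflecting that $\gaml = \{0\}\times\bT^1$ starts at $x=0$ whereas $\gamr = \{1\}\times\bT^1$ starts at $x=1$).

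For the base case $k=1$ of part (i), a point of $\cF_A\gamr = F_A\gamr$ is the image of $(1,t)$ for $t\in[0,1]$, and applying $\cF_A$ via \eqref{eq:cF_A} gives $\cF_A(1,t) = ([\beta + t],[2t])$. Writing the fractional parts as $\beta + t - n_0$ and $2t - m$ for suitable integers, this matches the claimed form with $k=1$ (the sum $\sum_{j=0}^{0}\beta^j 2^{-j}t = t$). For the inductive step, I would take $a \in \cF_A^k\gamr$ of the asserted form and compute $\cF_A a$. The second coordinate maps as $2y = 2(2^k t + m) = 2^{k+1}t + 2m$, so after extracting the integer part $\bar m$ we obtain $2^{k+1}t + \bar m$, matching the $(k+1)$-form. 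The first coordinate maps as $\beta x + y$ followed by taking the fractional part; substituting the inductive expressions for $x$ and $y$ yields
\[
    \beta\Bigl(\beta^k + \textstyle\sum_{j=0}^{k-1}\beta^j 2^{k-1-j}t + \sum_{j=0}^{k-1}\beta^j n_j\Bigr) + 2^k t + m,
\]
and the task is to reorganize this into $\beta^{k+1} + \sum_{j=0}^{k}\beta^j 2^{k-j}t + (\text{integer combination of powers of }\beta)$. The $\beta\cdot\beta^k = \beta^{k+1}$ term appears directly; the $t$-terms recombine since $\beta\sum_{j=0}^{k-1}\beta^j 2^{k-1-j}t = \sum_{j=1}^{k}\beta^j 2^{k-j}t$ and adding the new $2^k t = \beta^0 2^k t$ contribution from the old second coordinate completes the sum to $\sum_{j=0}^{k}\beta^j 2^{k-j}t$; the integer terms $\beta\sum_{j=0}^{k-1}\beta^j n_j + m$ together with whatever integer is subtracted when taking the fractional part reassemble into the required $\sum_{j=0}^{k}\beta^j \bar n_j$ form.

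The main obstacle, though largely bookkeeping rather than conceptual, is keeping careful track of the fractional-part operation, since each application of $\cF_A$ subtracts an integer-valued floor and one must verify that these corrections only ever shift the integer coefficients $\{n_j\}$ and $m$ without disturbing the real coefficients of $t$ and the $\beta^k$ term. Because $\beta$ is transcendental (non-algebraic), the powers $\{\beta^j\}_{j\ge 0}$ are rationally independent, so the integer combination $\sum_j \beta^j \bar n_j$ is genuinely the canonical form and no collapsing occurs; this is the structural fact that makes the representation meaningful and that will be exploited later. For part (ii), the identical induction applies, with the base point $(0,\bar t)$ giving $\cF_A(0,\bar t)=([\bar t],[2\bar t])$ so that no $\beta^k$ term is ever generated, yielding the stated formula for $b$. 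I would present part (i) in full and remark that part (ii) follows \emph{mutatis mutandis}.
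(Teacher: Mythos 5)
Your proposal is correct and follows essentially the same route as the paper: the same induction on $k$, the same base case $\cF_A(1,t)=([\beta+t],[2t])$ (resp.\ $([\bar t],[2\bar t])$ for $\gaml$), and the same reorganisation of the $t$-coefficients and integer terms after applying the matrix $\bigl(\begin{smallmatrix}\beta & 1\\ 0 & 2\end{smallmatrix}\bigr)$ and extracting fractional parts. The remark about rational independence of the powers of $\beta$ is not needed for this lemma itself (it is used only later, in Lemma~\ref{lem:most-annoying}), but this does not affect the argument.
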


\begin{proof}
    (\textit{i}) If \(k = 1\), a direct computation yields
    \[
        \cF_A \gamr =  \bigl\{\left([\beta + t], [2t] \right)  \bigr\}_{t \in [0,1]},
    \]
    which is in agreement with the statement choosing $n_0 \in \bZ$, \(m \in \{-1,0,1\}\) depending on \(t\).
    Let us assume that it is true for \(k \ge 2\). Then any \(a \in \cF_A^{k+1} \gamr\) is of the form
    \[
        \begin{split}
            &\Biggl[\begin{pmatrix}
                \beta & 1 \\ 
                0     & 2
            \end{pmatrix}\begin{pmatrix}\beta^k + \sum_{j=0}^{k-1}\beta^j 2^{k-1-j}t + \sum_{j=0}^{k-1}\beta^j n_j \\
                2^k t + m
            \end{pmatrix}\Biggr] \\
            &= \begin{pmatrix}
                \left[ \beta^{k+1} + \sum_{j=0}^{k-1}\beta^{j+1} 2^{k-1-j}t + \sum_{j=0}^{k-1}\beta^{j+1} n_j + 2^k t + m\right] \\
                \left[2^{k+1}t + 2m\right]
            \end{pmatrix},
        \end{split}
    \]
    for some \(t \in [0,1]\). Therefore, there exist \(t \in [0,1]\), \(\widetilde n,  \widetilde m \in \bZ\) such that the coordinates of \(a\) are
    \[
        \begin{pmatrix}
            \beta^{k+1} + \sum_{j=0}^{k}\beta^{j} 2^{k-j}t + \sum_{j=1}^{k}\beta^{j} n_{j-1} + \widetilde{n} \\
            2^{k+1}t + \widetilde m
        \end{pmatrix}.
    \]
    Renaming the integers \(\widetilde n, \widetilde{m}\) and \(n_j\) properly, we have proved the statement for $k+1$ and conclude by induction.
    
    (\textit{ii}) If $k=1$,
    \[
        \cF_{A}\gaml = \bigl\{\left(\left[\bar t\right] , \left[2\bar t\right] \right)  \bigr\}_{\bar t \in [0,1]},
    \]
    which is in agreement with the statement choosing $\bar n_0,$ \(\bar m \in \{-1,0,1\}\) depending on \(t\). Assuming that is true for \(k \ge 2\), any $b\in \cF_A^{k+1} \gaml$ is of the form
    \[
        \begin{split}
            &\left[\begin{pmatrix}
                \beta & 1 \\ 
                0     & 2
            \end{pmatrix}\begin{pmatrix} \sum_{j=0}^{k-1}\beta^j 2^{k-1-j}\bar t + \sum_{j=0}^{k-1}\beta^j \bar n_j \\
                2^k \bar t + \bar m
            \end{pmatrix} \right]\\
            &= \begin{pmatrix}
                 & \left[\sum_{j=0}^{k-1}\beta^{j+1} 2^{k-1-j}\bar t + \sum_{j=0}^{k-1}\beta^{j+1} \bar n_j + 2^k \bar t +\bar  m\right] \\
                 & \left[2^{k+1}\bar t + 2\bar m\right]
            \end{pmatrix},
        \end{split}
    \]
    for some \(\bar t \in [0,1]\). Therefore, there are integers \(\hat n\), \(\hat m\) such that the coordinates of \(b\) are
    \[
        \begin{pmatrix}
                 & \sum_{j=0}^{k}\beta^{j} 2^{k-j}\bar t + \sum_{j=1}^{k}\beta^{j} \bar n_{j-1} +\hat n \\
                 & 2^{k+1}\bar t + \hat m
        \end{pmatrix},
    \]
    which implies the statement for $k+1$.
\end{proof}

\begin{lemma}
    For any \(k \ge 1\),
    \label{lem:most-annoying}
    \[
        \cF_A^k \gamr \cap \cF^k_A \gaml = \emptyset.
    \]
\end{lemma}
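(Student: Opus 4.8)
The plan is to argue by contradiction using the explicit parametrisations from Lemma~\ref{lem:images} together with the transcendence of \(\beta\). Suppose that some \(a \in \cF_A^k\gamr\) coincides, as a point of \(\bT^2\), with some \(b \in \cF_A^k\gaml\). Writing \(a\) and \(b\) in the forms supplied by Lemma~\ref{lem:images}, equality in \(\bT^2\) means equality of each coordinate modulo \(1\), and I would record the associated parameters \(t,\bar t\in[0,1]\), integers \(\{n_j\},\{\bar n_j\},m,\bar m\).

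First I would exploit the \emph{second} coordinate. The relation \(2^k t + m \equiv 2^k \bar t + \bar m \pmod 1\) forces \(2^k(t-\bar t)\in\bZ\), i.e.\ \(t-\bar t = \ell/2^k\) for some \(\ell\in\bZ\). The purpose of this step is to pin the a priori real difference \(t-\bar t\) down to a dyadic rational; this is precisely what will render the first-coordinate equation a polynomial identity with rational coefficients.

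Next I would subtract the \emph{first} coordinates modulo \(1\). Writing \(p_j = n_j - \bar n_j\in\bZ\), this reads
\[
    \beta^k + \sum_{j=0}^{k-1}\beta^j 2^{k-1-j}(t-\bar t) + \sum_{j=0}^{k-1}\beta^j p_j \in \bZ.
\]
Substituting \(t-\bar t = \ell/2^k\) and using \(2^{k-1-j}/2^k = 2^{-(j+1)}\), the middle sum becomes \(\frac{\ell}{2}\sum_{j=0}^{k-1}(\beta/2)^j = \ell\,\widetilde\beta_k\) by the very definition of \(\widetilde\beta_k\). Hence there is \(N\in\bZ\) with
\[
    \beta^k + \ell\,\widetilde\beta_k + \sum_{j=0}^{k-1}\beta^j p_j = N.
\]
Expanding \(\ell\,\widetilde\beta_k = \sum_{j=0}^{k-1}\frac{\ell}{2^{j+1}}\beta^j\) and collecting powers of \(\beta\) exhibits \(\beta\) as a root of
\[
    \beta^k + \sum_{j=0}^{k-1}\Bigl(p_j + \frac{\ell}{2^{j+1}}\Bigr)\beta^j - N = 0,
\]
a polynomial of degree \(k\) with \emph{rational} coefficients and leading coefficient \(1\).

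Finally, since the leading coefficient equals \(1\) this polynomial is nonzero, so \(\beta\) would be algebraic, contradicting the standing hypothesis that \(\beta\) is non-algebraic; this yields the claim for every \(k\ge 1\). The point that must be handled carefully is the bookkeeping in the middle step: the second-coordinate relation is genuinely essential, because it converts \(t-\bar t\) into a dyadic rational (without it the coefficients would merely be real and transcendence over \(\bQ\) would be useless), while the offset \(\beta^k\) — exactly the term distinguishing \(\gamr\) from \(\gaml\) — guarantees a true degree-\(k\) term that the lower-order rational terms cannot cancel. I expect no serious obstacle beyond this tracking of the modulo-\(1\) reductions.
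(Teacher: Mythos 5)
Your proposal is correct and follows essentially the same route as the paper: equate the two parametrisations from Lemma~\ref{lem:images}, use the second coordinate to force \(t-\bar t\) to be a dyadic rational, substitute into the first coordinate, and conclude that \(\beta\) would be a root of a nonzero polynomial with rational coefficients, contradicting transcendence. Your treatment is in fact slightly more careful than the paper's in making the modulo-\(1\) reductions explicit (absorbing them into integer constants), but the argument is the same.
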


\begin{proof}
    By Lemma~\ref{lem:images}, if \(a \in   \cF_A^k \gamr \cap \cF^k_A \gaml\) for some \(k\) then there exist \(t, \bar t \in [0,1]\), \(n_j, \bar n_j , m, \bar m \in \bZ\), such that 
    \begin{multline*}
        \begin{pmatrix}
             & \beta^{k} + \sum_{j=0}^{k-1}\beta^{j} 2^{k-1-j}t + \sum_{j=0}^{k-1}\beta^{j} n_j \\
             & 2^{k}t + m
        \end{pmatrix} \\ 
        =  \begin{pmatrix}
             & \sum_{j=0}^{k-1}\beta^{j} 2^{k-1-j}\bar t + \sum_{j=0}^{k-1}\beta^{j} \bar n_j \\
             & 2^{k}\bar t + \bar m
        \end{pmatrix}.
    \end{multline*}
    The equation for the \(y\)-coordinate implies that, for some \(p \in \bZ\), 
    \[
        \bar t = t + 2^{-k}p. 
    \]
    Substituting this in the first equation we have
    \[
        \beta^{k}  + \sum_{j=0}^{k-1}\beta^{j} \left(n_j - \bar n_j -2^{-1-j} p\right) = 0.
    \]
    Since the the function \(\mathcal{P}(\beta) = \beta^{k}  + \sum_{j=0}^{k-1}\beta^{j} \left(n_j - \bar n_j -2^{-1-j} p\right)\) is a polynomial with rational coefficients and \(\beta\) is non-algebraic by assumption, the above equation is never satisfied and we conclude.
\end{proof}

In the next lemma we show that \(\gamr\) is a proper discontinuity (Definition~\ref{def:proper}) of the map \(\cF_A\).
First we construct the \(\alpha_k\), the coefficient functions.
Let \(\gamma_1 =F\gamma\), \(\gamma_{k+1} = \cF^{k}\gamma_1\), \(k \in \bN\).
Let \(\alpha_1 : \gamma_1 \to \bC\) be defined as \(\alpha_1 \equiv 1\).
We then define inductively, for all \(k\in\bN\), \(\alpha_k:\gamma_k \to \bC\) such that  \ref{it:A1} holds, i.e., for a.e. \(x\in \gamma_k\),
\begin{equation}
    \label{eq:def-A-alpha}
    \alpha_{k+1}(\cF x) = \alpha_{k}(x)\varphi(x)^{-1}  \Jac{ \gamma_{k}}{\cF}(x)^{-1}.
\end{equation}
This is well defined since \(\cF : \gamma_{k} \to \gamma_{k+1}\) is injective, at least a.e. \(x\in \gamma_{k}\).
That it is injective is due to the fact that \(\gamma_{k}\) is the union of straight lines with slope \(\widetilde \beta^{-1}_k\) and \( \widetilde \beta_{k} \neq \widetilde \beta_{k+1}\). 

\begin{lemma}
    The curve \(\gamr\) is a proper discontinuity. 
\end{lemma}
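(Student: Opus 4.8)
The goal is to verify that $\gamr$ satisfies the four conditions \ref{it:A0}--\ref{it:A3} of Definition~\ref{def:proper}. The plan is to address each condition in turn, drawing on the explicit structure of the piecewise affine map $\cF_A$ and the geometric lemmas already established.

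First I would dispatch \ref{it:A0} directly from \eqref{eq:not-equal}: the explicit computation shows $\lim_{x\to 1^-}\cF_A(x,y)\neq\lim_{x\to 0^+}\cF_A(x,y)$ for every $y$, so $\cF_A$ is discontinuous at each point of $\iota\gamr$. Condition \ref{it:A1} is then immediate from the construction preceding the statement: the functions $\alpha_k$ were defined inductively by \eqref{eq:def-A-alpha}, which is exactly the relation \eqref{eq:good-weights}, and the induction is well-posed because $\cF:\gamma_k\to\gamma_{k+1}$ is a.e. injective — this injectivity follows because $\gamma_k$ is a union of straight lines of slope $\widetilde\beta_k^{-1}$ with the slopes $\widetilde\beta_k$ strictly increasing in $k$, so distinct lines of $\gamma_k$ map to distinct parallel lines.

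The substance lies in \ref{it:A2} and \ref{it:A3}, and here I would lean on the arithmetic separation results. For \ref{it:A2}, I need $\gamma_k\cap\Gamma_1\approxeq\emptyset$ for $k\ge 2$, where $\Gamma_1=F\Gamma=\gamma_1^{(r)}\cup\gamma_1^{(\ell)}$ comprises the first images of both boundary pieces $\gamr$ and $\gaml$. Writing $\gamma_k=\cF_A^{k-1}\gamma_1^{(r)}$, the claim $\gamma_k\cap\gamma_1^{(\ell)}\approxeq\emptyset$ reduces, after the observation that these are unions of lines of \emph{different} slopes (since $\widetilde\beta_{k}\neq\widetilde\beta_1$ for $k\ge 2$, and $\gaml$-images carry the same slope structure), to checking that the finitely many intersection points do not accumulate — equivalently that the lines are genuinely non-coincident. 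The non-coincidence is precisely the content of Lemma~\ref{lem:most-annoying}, whose proof exhibits the obstruction as a nonvanishing rational polynomial $\mathcal{P}(\beta)$ evaluated at the non-algebraic number $\beta$; the same transcendence argument, applied to $\cF_A^{k-1}\gamr$ versus $\gamma_1^{(\ell)}$ and against $\gamma_1^{(r)}$, rules out coincidence of the relevant lines. Since two non-parallel lines meet in at most one point and two distinct parallel lines not at all, the intersection is finite, hence $1$-dimensional Hausdorff measure zero.

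For \ref{it:A3} I must show $\cF_A^{-1}\gamma_{k+1}\cap\bigG\approxeq\gamma_k$. The inclusion $\gamma_k\subset\cF_A^{-1}\gamma_{k+1}\cap\bigG$ is automatic since $\cF_A\gamma_k=\gamma_{k+1}$ and $\gamma_k\subset\bigG$. The reverse inclusion (up to $\approxeq$) is where the main obstacle sits: I must rule out that some \emph{other} component $\Gamma_j$ of $\bigG$ — in particular some image of $\gaml$ — maps into $\gamma_{k+1}$ along a positive-length set. This again reduces to a slope-and-transcendence comparison: a piece of $\cF_A^{j-1}\gaml$ lands in $\gamma_{k+1}=\cF_A^k\gamr$ on a set of positive length only if the two carry coinciding lines, which the polynomial-in-$\beta$ argument of Lemma~\ref{lem:most-annoying} (suitably re-indexed to compare $\cF_A^{j-1}\gaml$ with $\cF_A^{k}\gamr$) forbids. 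I would organize this by invoking Lemma~\ref{lem:images} to write generic points of the two families in the normal form with $\beta$-polynomial first coordinates, equate them, and derive a monic rational polynomial identity in $\beta$ that transcendence defeats. The only care needed is bookkeeping over the finitely many pairs $(j,k)$ and the two boundary pieces; each individual comparison is the same transcendence mechanism, so the finite intersection has measure zero and \ref{it:A3} holds. Having verified all four conditions, $\gamr$ is a proper discontinuity.
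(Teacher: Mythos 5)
Your proposal is correct and follows essentially the same route as the paper: \ref{it:A0} from the explicit computation, \ref{it:A1} from the inductive construction of the $\alpha_k$, \ref{it:A2} from the slope mismatch $\widetilde\beta_k\neq\widetilde\beta_1$, and \ref{it:A3} by decomposing $\bigG$ into iterates, using slopes for the unequal-index cases and Lemma~\ref{lem:most-annoying} for the equal-index $\gaml$-versus-$\gamr$ case. The only (harmless) redundancy is invoking the transcendence lemma for \ref{it:A2}, where the slope mismatch alone already gives a finite intersection.
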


\begin{proof} 
    Assumption \textbf{\ref{it:A0}} is immediate as observed above \eqref{eq:not-equal}.

    \textbf{\ref{it:A1}:}
    The required property \eqref{eq:good-weights} holds by construction.

    \textbf{\ref{it:A2}:} 
    Suppose that, for some \(k\geq 2\), \(\gamma_k \cap (F \gamr\cup F \gaml)\) contains some curve \(\widetilde{\gamma}\). 
    This would imply that \(\widetilde{\gamma}\) is a line of slope \(\widetilde{\beta}_k^{-1}\) since \(\gamma_k\) is the union of lines of slope \(\widetilde{\beta}_k^{-1}\).
    On the other hand it also implies that \(\widetilde{\gamma}\) is a line of slope \(\widetilde{\beta}_1^{-1}\) since \(\widetilde{\gamma} \subset (F \gamr\cup F \gaml)\).
    This is a contradiction since \( \widetilde \beta_{k} \neq \widetilde \beta_{1}\).

    \textbf{\ref{it:A3}:} 
    Let \(\widetilde{\gamma} \subseteq \widehat \Gamma\) be defined as
    \[
        \widetilde \gamma = \left(\cF_A^{-1}\gamma_{k+1}\cap \bigG\right) \setminus \cF_A^k\gamr,
    \]
    and assume, for sake of contradiction, that \(\widetilde\gamma\) has  positive 1-dimensional Hausdorff measure. 
    Then, for some \(n \in \bN\), the set \(\widetilde \gamma_n = \widetilde \gamma \cap \cF_A^n (\gaml \cup \gamr) \) has positive 1-dimensional Hausdorff measure (\(\widehat \Gamma\) is the countable union of the \(\cF_A^n(\gamr\cup \gaml)\), \(n \in \bN_0\)) and it is such that \(\cF_A \widetilde \gamma_n \subseteq \cF_{A}^{k+1}\gamr \). Let us consider two separate cases. 
    First, suppose that \(n \neq k\). 
    Note that \(\cF_A \widetilde \gamma_n \subseteq \cF_A^{n+1}(\gamr \cup \gaml)\) is a union of lines with slope \(\widetilde{\beta}_{n+1}^{-1}\) while \(\cF_A^{k+1}\gamr\) is a union of lines with slope \(\widetilde{\beta}_{k+1}^{-1}\). 
    Since \(k \neq n\), we have a contradiction, because the intersection of two lines with different slopes on the torus  cannot have positive 1-dimensional Hausdorff measure. 
    Let us suppose that \(n = k\). 
    In this case, 
    \[
        \widetilde \gamma_{k} = \widetilde \gamma \cap \cF_A^n (\gaml \cup \gamr) \subseteq \cF_A^k \gaml \quad \text{and} \quad \cF_A \widetilde \gamma_{k} \subseteq \cF^{k+1}_A\gamr,
    \]
    which is not possible because of Lemma~\ref{lem:most-annoying}, concluding the proof.
\end{proof}

\subsubsection*{Comparison with upper bounds on the essential spectrum}

Consider the transfer operator associated to a piecewise smooth expanding map $\cF$ with piecewise Hölder weight \(\varphi\).
It is known~\cite{BC23,Thomine11} that the essential spectral radius of this operator acting on $BV$ is bounded above by
\begin{equation}
    \label{eq:upper-bound}
    \overline{\Lambda}(\cF, \varphi) = e^{h_{\mathrm{m}}(\cF)} \lim_{n\to \infty}\left\| \varphi_n \det D\cF^n \cdot \lambda_n^{-1}\right\|_{L^\infty}^{\frac 1n},
\end{equation}
where \(\lambda_n(x) =\inf_{\omega \in \Omega^n} \inf_{v \in \cT_x\cM: \|v\|=1} \|D_x\cF^n_{\omega}\|\) and\footnote{Note that $h_{\mathrm{m}}$ depends also on the partition $\Omega$.}
\begin{equation}
    \label{complexity}
    h_{\mathrm{m}}(\cF) = \lim_{n\to \infty} \frac{1}{n}\log \max _{x \in M} \#\left\{\omega_n \in \Omega^n \mid  \overline{\omega_{{n}}}\ni x\right\},
\end{equation}
$\Omega^n$ being the partition associated to $\cF^n$ made of cylinder sets. The quantity $h_{\mathrm{m}}(\cF)$ provides a measure of the growth of the intersection multiplicities of a discontinuity in a single point.
Notice that \(\Lambda_{BV}(\cF, \varphi, \gamr) \le  \overline{\Lambda}(\cF, \varphi)\), as it should be. 
In the affine case, such as $\cF_A$, equality holds, as shown in the following. 

\begin{lemma}
    \label{prop:beta}
    Let $\cF_A$, \(\beta\) and \(\gamma\) be as above. 
    Then
    \[
        \begin{aligned}
            \overline{\Lambda}(\cF_A, 1)                                           & = \Lambda_{BV}(\cF_A,1, \gamr)= \max\{2, \beta\},                                                   \\
            \overline{\Lambda}(\cF_A, {\left|\det D\cF_A \right|}^{-1}) & = \Lambda_{BV}(\cF_A, {\left|\det D\cF_A \right|}^{-1}, \gamr) = \min \{2, \beta\}^{-1}.
        \end{aligned}
    \]
\end{lemma}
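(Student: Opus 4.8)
The plan is to use the affine structure of $\cF_A$ to reduce every quantity to an explicit computation involving the sequence $\{\widetilde\beta_k\}$ and the powers of the constant derivative $A = \left(\begin{smallmatrix}\beta & 1\\ 0 & 2\end{smallmatrix}\right)$. First I would note that, because $\cF_A$ is piecewise affine and $\gamr$ is a straight segment with unit tangent $v = \left(\begin{smallmatrix}0\\1\end{smallmatrix}\right)$, the Jacobian along the curve is independent of the base point and equal to $\Jac{\gamr}{\cF_A^k}(x) = \norm{w_k} = 2^k\sqrt{\widetilde\beta_k^2 + 1}$, where $w_k = D\cF_A^k v$ is the tangent vector already computed above. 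The one asymptotic input I really need is $\lim_{k\to\infty}\norm{w_k}^{1/k} = \max\{2,\beta\}$, which I would prove by treating the geometric sum $\widetilde\beta_k = \tfrac12\sum_{\ell=0}^{k-1}(\beta/2)^\ell$ in two regimes: for $\beta<2$ the sequence $\widetilde\beta_k$ converges, so $\norm{w_k}$ grows like $2^k$; for $\beta>2$ one has $\widetilde\beta_k\asymp(\beta/2)^k$, so $2^k\widetilde\beta_k\asymp\beta^k$ dominates and $\norm{w_k}$ grows like $\beta^k$. The borderline $\beta=2$ does not occur since $\beta$ is non-algebraic.

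The two lower bounds then follow by inserting the (constant) weights into the definition \eqref{eq:Lambda}; since the relevant quantity converges, the $\liminf$ there is a genuine limit. For $\varphi\equiv1$ we have $\varphi_k\equiv1$, hence $\inf_{x\in\gamr}\abs{\varphi_k(x)\,\Jac{\gamr}{\cF_A^k}(x)} = \norm{w_k}$ and $\Lambda_{BV}(\cF_A,1,\gamr) = \max\{2,\beta\}$. For $\varphi = \abs{\det D\cF_A}^{-1} = (2\beta)^{-1}$ we have $\varphi_k\equiv(2\beta)^{-k}$, so the same computation gives $\Lambda_{BV}(\cF_A,(2\beta)^{-1},\gamr) = (2\beta)^{-1}\max\{2,\beta\} = \min\{2,\beta\}^{-1}$, the last step using the identity $\max\{2,\beta\}\cdot\min\{2,\beta\} = 2\beta$.

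For the upper bound I would assemble the three factors in \eqref{eq:upper-bound}. The determinant factor is immediate, $\det D\cF_A^n = (2\beta)^n$. For the minimal expansion $\lambda_n$, which for an affine map equals the least singular value of $A^n = \left(\begin{smallmatrix}\beta^n & 2^n\widetilde\beta_n\\ 0 & 2^n\end{smallmatrix}\right)$, I would use that the least singular value is $\abs{\det A^n}$ divided by the largest singular value, together with the fact that the largest singular value grows at the same rate as $\norm{w_n}$, namely at rate $\max\{2,\beta\}$ (this is read off from the entries of $A^n$ exactly as in the first paragraph). Hence $\lambda_n^{1/n}\to 2\beta/\max\{2,\beta\} = \min\{2,\beta\}$, and substituting into \eqref{eq:upper-bound} gives $\overline{\Lambda}(\cF_A,1) = e^{h_{\mathrm{m}}(\cF_A)}\max\{2,\beta\}$ and $\overline{\Lambda}(\cF_A,(2\beta)^{-1}) = e^{h_{\mathrm{m}}(\cF_A)}\min\{2,\beta\}^{-1}$.

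The hard part will be showing that the complexity vanishes, $h_{\mathrm{m}}(\cF_A) = 0$, which is exactly what is needed to match the upper and lower bounds. The strategy I would follow is geometric: the boundary of the cylinder partition $\Omega^n$ lies in $\bigcup_{j=0}^{n-1}\cF_A^{-j}(\partial\Omega)$, and since $\cF_A^{-1}$ is locally the affine map $A^{-1}$, the preimages of a line all share a single slope; thus the boundary lines of $\Omega^n$ split into only $O(n)$ distinct slope families, the lines within each family being mutually parallel. Consequently at most $O(n)$ of these lines can pass through any single point, so the local count $\#\{\omega_n\in\Omega^n : \overline{\omega_n}\ni x\}$ is bounded polynomially in $n$ and $h_{\mathrm{m}}(\cF_A) = \lim_n \tfrac1n\log(\mathrm{poly}(n)) = 0$. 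Making the enumeration of distinct slope families and their parallelism precise — and checking that the finitely many injectivity cuts introduced at each level do not spoil it — is the delicate step; once it is in place, the equalities $\overline{\Lambda} = \Lambda_{BV}$ and the stated values $\max\{2,\beta\}$ and $\min\{2,\beta\}^{-1}$ follow at once from the computations above, consistently with the general inequality $\Lambda_{BV}\le\overline{\Lambda}$ noted before the lemma.
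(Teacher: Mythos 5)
Your computation of the lower bounds is correct and, if anything, more explicit than the paper's: you identify $\Jac{\gamr}{\cF_A^k}$ with $\norm{w_k}=2^k\sqrt{\smash{\widetilde\beta_k^2}+1}$ and extract the rate $\max\{2,\beta\}$ by splitting the geometric sum into the two regimes $\beta<2$ and $\beta>2$, whereas the paper argues more qualitatively that the slope of $\gamma_k$ converges to the expanding eigendirection so that $\Jac{\gamma_k}{\cF}\to\max\{2,\beta\}$; both give the same answer, and your version makes the $\liminf$-is-a-limit point transparent. The algebra $\max\{2,\beta\}\cdot\min\{2,\beta\}=2\beta$ and the resulting values for the weight $\abs{\det D\cF_A}^{-1}$ match the paper exactly, as does your evaluation of the determinant and of $\lambda_n$ (the paper simply writes $\lambda_n=\min\{2,\beta\}^n$; your singular-value argument via $\sigma_{\min}=\abs{\det}/\sigma_{\max}$ justifies the same asymptotics for the non-normal matrix $A^n$). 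The one place you genuinely diverge is the complexity term: the paper disposes of $h_{\mathrm{m}}(\cF_A)=0$ in one line by citing Lemma~1 of \cite{Buzzi97}, while you sketch a self-contained geometric proof — the boundary of $\Omega^n$ decomposes into $O(n)$ families of mutually parallel lines, so at most $O(n)$ boundary lines pass through any point and the cylinder multiplicity grows polynomially. That sketch is the right idea (it is essentially Buzzi's argument for affine maps), but as you yourself flag, it is the delicate step and is left incomplete in your write-up; to make the proof airtight you should either carry out the enumeration of slope families carefully or simply invoke the citation as the paper does. With that step supplied, everything else in your argument closes the loop and the stated equalities follow.
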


\begin{proof}
    Following the definition \eqref{eq:Lambda}, to determine \(\Lambda_{{BV}}(\cF, 1, \gamma)\) we calculate \(\smash{|\Jac{\gamma}{\cF_A^k}|^{{1}/{k}}}\).
    For all \(\beta \in (1, \infty)\), the curve \(\gamr\) is not aligned to the direction of minimal expansion. 
    Therefore, its slope approaches the direction of the eigenvector corresponding to the maximal eigenvalue, i.e., \(\max\{2,\beta\}\). 
    In this way, one gets that \(\smash{\Jac{\gamma_k}{\cF}}\) tends to \(\max\{2,\beta\}\) as \(k \rightarrow \infty\) and we can conclude.
    Similarly, in order to determine \(\Lambda_{{BV}}(\cF, {|\det D\cF_A |}^{-1}, \gamma)\) we  calculate \(({\left|\det D\cF_A \right|}^{-1} \Jac{\gamma}{\cF_A^k})^{\frac{1}{k}}\).
    Since \(\left|\det D\cF_A \right| = 2 \beta\), \(\Lambda_{{BV}}(\cF, {\left|\det D\cF_A \right|}^{-1}, \gamma) = 2^{-1}\beta^{-1} \max\{2,\beta\} = \min \{2, \beta\}^{-1}\).

    Let us then compute the upper bounds. 
    By \cite[Lemma 1]{Buzzi97} we have \( h_{\mathrm{m}}(\cF_A) = 0\). Moreover,
    \[
        \det D\cF^n_A = (2\beta)^n, \quad \lambda_n = \min\{2,\beta\}^n.
    \]
    Therefore, by \eqref{eq:upper-bound}, \(\overline{\Lambda}(\cF_A, 1) = \max\{2,\beta \}\) whereas $\overline{\Lambda}(\cF_A, {|\det D\cF_A |}^{-1})=  \min\{2,\beta\}^{-1}$.
\end{proof}

\begin{remark}
    All indications suggest that a similar argument would imply the equality of upper and lower bounds for a larger class of piecewise affine expanding maps, whenever the discontinuity set is nowhere aligned to the direction of minimal expansion, as the ones introduced in \cite{Buzzi97}. 
    It remains to understand any possible relations between \(\Lambda_{BV}\) and the Lyapunov exponents for general two-dimensional maps with discontinuities.  
\end{remark}

\subsection{Cocycles with base which fails to be Markov}
\label{sec:cocycle}

Let \(\cM = \bT^2\) and let \(\cF_{B}: \cM \rightarrow \cM\) be defined as
\[
    \cF_{B}(x,y) = \left(T(x), S(x,y) \right),
\]
where \(T: \bT^1 \to \bT^1\) is piecewise $\cC^2$ without critical points and \(S\) is $\cC^2(\bT^2,\bT^1)$ without critical points such that \(S(x,\cdot):\bT^1 \rightarrow \bT^1\) is surjective for all \(x\). 
We assume that \(T\) is \(\cC^1\) on \(\bT^1 \setminus \{a\}\) for some \(a \in \bT^1\). 
Let \(\varphi = {|\det D\cF_B |}^{-1}\), i.e., in this example we specialise to the case where the transfer operator is the one corresponding to the SRB measure.
For convenience we use the notation \(a_{+}\), \(a_{-}\) to write
\(T^ka_{+} = \lim_{\epsilon \rightarrow 0}T^k(a+\epsilon)\) and \(T^ka_{-} = \lim_{\epsilon \rightarrow 0}T^k(a-\epsilon)\).
Finally we suppose that \(\{T^k a_{+}\}_{k=0}^{\infty}\) is a finite set of distinct points whilst \(\{T^k a_{-}\}_{k=0}^{\infty}\) is an infinite set of distinct points.

The assumptions allow us to choose \(T\) to be a \(\beta\)-map where \(\beta\) is chosen such that the map is non-Markov.
However the class of maps we consider here is much larger, there is no requirement for \(T\) to be affine. 
The assumptions imply that \(T\) is non-Markov and
\begin{equation}
    \label{eq:1D-discont}
    \begin{split}
        T^{k}a_{+} &\neq T^{j}a_{-}, \quad \forall j,k \in \bN \\
        T^{k}a_{-} &\neq T^j a_{-}, \quad j \neq k.
    \end{split}
\end{equation}
Let \(\Delta = \bigcup_{j=0}^{\infty} \{ T^j a_{+}, T^j a_{-}\}\).
In the present case we can apply the result available for one-dimensional maps with discontinuities (\cite[Lemma 3.2]{BCJ22} with \(k_0 = 1\)) and consequently
\begin{equation}
    \label{eq:1D-shift}
    T^{-1} (T^{k+1} a_{-})\cap \Delta = T^{k}a_{-}, \quad k \ge 1.
\end{equation}
Let \(\Gamma = \{a\} \times \bT^1\).
The assumptions on \(T\) and \(S\) imply that \(\cF_B\) is \(\cC^2\) on \(\bT^2 \setminus \Gamma\).
Following the notation of Section~\ref{sec:results} we consider the smooth manifold $M$ with boundary \(\partial M = \gaml \cup \gamr\), where
\[
    \quad \gamr= \{a_{-}\}\times \bT^1, \quad \gaml = \{a_{+}\} \times \bT^1.
\]
Following the notations of Definition~\ref{def:proper}, \(\gamma_1 = \{ T a_{-}\} \times \bT^1\) and, for all \(k\in\bN\), \(\gamma_{k} = \{ T^k a_{-}\} \times \bT^1\) (since \(S(x,\cdot)\) is surjective).
In this particular case we may choose the functions \(\alpha_k\) to be constant on \(\gamma_k\).
Let, for \(k \ge 2\),
\begin{equation}
    \label{eq:def-alphak}
    \alpha_{k} = (T^{k-1})'(T a_-).
\end{equation}
Note that, since \(\varphi = {|\det D\cF_B |}^{-1}\), for all \(k\in\bN\), \(x \in \gamma_{k}\), 
\begin{equation}
    \label{eq:phi-det-B}
    \varphi(x)\Jac{\gamma_{k}}{\cF_{B}}(x) = {\left|\smash{T'(T^k a_{-})}\right|}^{-1}.
\end{equation}

\begin{lemma}
    \label{prop:ex-B}
    The curve \(\gamr\) is a proper discontinuity.
\end{lemma}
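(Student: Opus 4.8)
The plan is to verify the four conditions (A-0)--(A-3) of Definition~\ref{def:proper} for the curve $\gamr = \{a_{-}\}\times\bT^1$, exploiting the product-like structure of $\cF_B$ together with the one-dimensional information \eqref{eq:1D-discont} and \eqref{eq:1D-shift}. The key observation throughout is that, since $S(x,\cdot)$ is surjective and $\cF_B$ preserves the foliation into vertical circles, each curve $\gamma_k$ is exactly the full vertical circle $\{T^k a_{-}\}\times\bT^1$, so the geometry of the discontinuity propagation is entirely controlled by the base dynamics of $T$. This reduces the two-dimensional verification to the one-dimensional statements already recorded.

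First I would dispatch \ref{it:A0}: since $T$ is discontinuous at $a$ with $a_{+}\neq a_{-}$ as limits from the two sides, the map $\cF_B$ fails to be continuous at every point of $\iota\gamr$, which is immediate from the definition of $\cF_B$ and the fact that $S$ is continuous. Next, \ref{it:A1} is handled by the explicit choice $\alpha_k = (T^{k-1})'(Ta_-)$: because we work with the SRB weight $\varphi = {|\det D\cF_B|}^{-1}$ and the $\alpha_k$ are constant on each vertical circle $\gamma_k$, the required cocycle relation \eqref{eq:good-weights} should collapse, via \eqref{eq:phi-det-B}, to the chain rule $(T^{k})'(Ta_-) = (T^{k-1})'(Ta_-)\,T'(T^k a_-)$ for the one-dimensional derivative along the orbit of $a_{-}$. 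I would just check that the fibre Jacobian contributions cancel against $\varphi$ exactly as in \eqref{eq:phi-det-B}, leaving a purely base-dynamical identity.

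For \ref{it:A2} and \ref{it:A3} the work is to translate the one-dimensional facts into the vertical-circle language. Since $\gamma_k = \{T^k a_{-}\}\times\bT^1$ and $\Gamma_1 = F\Gamma = (\{Ta_{+}\}\cup\{Ta_{-}\})\times\bT^1$, the intersection $\gamma_k \cap \Gamma_1$ is empty whenever $T^k a_{-}\notin\{Ta_{+},Ta_{-}\}$; by \eqref{eq:1D-discont} this holds for all $k\geq 2$, giving \ref{it:A2}. For \ref{it:A3}, I would use $\bigG = \Delta\times\bT^1$ (the union of the $\gamma_j$ together with the images of $\gaml$), so that $\cF_B^{-1}\gamma_{k+1}\cap\bigG$ is a union of vertical circles sitting over $T^{-1}(T^{k+1}a_{-})\cap\Delta$; the one-dimensional shift relation \eqref{eq:1D-shift} identifies this base set with the single point $T^k a_{-}$, yielding $\cF_B^{-1}\gamma_{k+1}\cap\bigG\approxeq\gamma_k$ as required.

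The main obstacle I anticipate is bookkeeping around the two boundary components $\gaml$ and $\gamr$ and the fact that $\{T^k a_{+}\}$ is \emph{finite} while $\{T^k a_{-}\}$ is infinite: one must be careful that the orbit of $a_{+}$ (which may eventually collide with itself or feed back into $\Delta$) does not spoil the disjointness needed for \ref{it:A3}, and that the ``extra'' preimages of $\gamma_{k+1}$ lying outside $\bigG$ are genuinely absent from the relevant intersection. Fortunately this is precisely the content packaged into \eqref{eq:1D-shift}, which already accounts for the full set $\Delta$; so the two-dimensional verification should amount to lifting that lemma through the surjective fibre map $S$ and confirming that no horizontal (positive-length) overlaps are created, since distinct base points give distinct, hence disjoint, vertical circles.
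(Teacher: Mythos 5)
Your proposal is correct and follows essentially the same route as the paper: verify \ref{it:A0} from $Ta_+\neq Ta_-$, verify \ref{it:A1} by the chain rule via \eqref{eq:phi-det-B}, and reduce \ref{it:A2} and \ref{it:A3} to the one-dimensional facts \eqref{eq:1D-discont} and \eqref{eq:1D-shift} using that each $\gamma_k$ is the full vertical circle $\{T^k a_-\}\times\bT^1$. The only cosmetic difference is that you write $\bigG=\Delta\times\bT^1$ (including the $j=0$ terms), a harmless superset since \eqref{eq:1D-shift} already accounts for all of $\Delta$.
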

\begin{proof}
    Assumption \textbf{\ref{it:A0}} is satisfied since \(T a_{+} \neq T a_{-}\) \eqref{eq:1D-discont}.

    \textbf{\ref{it:A1}:}
    Using the above \eqref{eq:phi-det-B},
    \[
        \begin{aligned}
            \alpha_{k+1} &= (T^{(k+1)-1})'(T a_-) = (T^{k-1})'(T a_-) \cdot |T'(T^k a_-)| \\
            &= \alpha_{k} \cdot \varphi(x)^{-1} \cdot \Jac{\gamma_k}{\cF_{B}}(x)^{-1},
        \end{aligned}
    \]
    so proving \ref{it:A1}. 

    \textbf{\ref{it:A2}:} 
    Again, following the notation of Definition~\ref{def:proper}, \(\Gamma_1 = \{Ta_{+},Ta_{-}\} \times \bT^1\) (using the fact that \(S(x,\cdot)\) is surjective).
    Since \(\gamma_{k} = \{ T^k a_{-}\} \times \bT^1\), the relations between the images of the discontinuity \eqref{eq:1D-discont} implies that, for all \(k\geq 2\), \(\gamma_{k} \cap \Gamma_1 = \emptyset\)

    \textbf{\ref{it:A3}:} 
    Following the notation of Section~\ref{sec:results}, \(\bigG = \bigcup\nolimits_{j\in\bN}\cF^j \Gamma_{1}\) and so (again using the fact that \(S(x,\cdot)\) is surjective),
    \[
        \widehat{\Gamma} =\left({\bigcup\nolimits_{j\in\bN}}\left\{T^j a_{-} \right\} \times \bT^1\right) \cup \left({\bigcup\nolimits_{j\in\bN}}\left\{T^j a_{+}\right\} \times \bT^1 \right).
    \]
    For \(k \in \bN\),
    \[
        \begin{split}
            \cF_{B}^{-1}\gamma_{k+1} &=\cF_{B}^{-1}\left(\left\{T^{k+1} a_{-} \right\} \times \bT^1\right) = \left\{T^{-1}\left(T^{k+1} a_{-}\right)\right\} \times \bT^1. 
        \end{split}
    \]
    However, using \eqref{eq:1D-shift},
    \[
        \begin{split}
            \left(\cF_{B}^{-1} \gamma_{k+1}\right) \cap \widehat \Gamma &=\bigcup_{j\in\bN}\left(\left\{T^{-1}(T^{k+1} a_{-}) \cap T^j (\{a_{+}\} \cup \{a_{-}\}) \right\} \times \bT^1\right) \\
            &=  \{ T^{k} a_{-}\} \times \bT^1 = \gamma_{k},
        \end{split}
    \]
    showing \ref{it:A3} and concluding the proof.
\end{proof}

\begin{remark}
Calculating the estimates \eqref{eq:Lambda}, using the formula previously obtained \eqref{eq:phi-det-B}, 
\begin{equation*}
    \Lambda_{BV}(\cF_{B},\left|\det D\cF_{B} \right|^{-1},\gamr) =\liminf_{n\to \infty}\left|(T^{n})'(a_-)\right|^{-1/n}.
\end{equation*}
Since, in this present setting, \(\len{\gamma_k}\) is uniformly bounded, \(\Lambda_{L^\infty, 2}\) will be equal to \(\Lambda_{BV}\).
On the other hand \(\Lambda_{L^\infty, 1}\) would give a worse (if there is expansion in the second coordinate) or equal (if neutral behaviour in second coordinate) bound.
The estimates on the essential spectral radius for this class of examples are equal to the estimates obtained \cite{BCJ22} for the one-dimensional map \(T\).
\end{remark}
    
\begin{remark}
    \label{rem:skew-prod}
    A broad class of interesting examples fits with the above, in particular skew products on \(\bT^2\) of the form 
    \[
        (x,y) \mapsto (T(x), y+\sigma(x)).
    \]
    If $T$ is a piecewise smooth expanding non-Markov map satisfying the assumptions above with $ T' \ge \lambda >1$ and $\sigma$ smooth (this is a partially hyperbolic system with a neutral direction) we obtain, for the transfer operator with weight as above, the estimates, $\Lambda_{BV} = \Lambda_{L^{\infty}} \ge \lambda^{-1}$.\footnote{Combining the techniques used in \cite{CL22} and \cite{BE17}, it should be possible to obtain upper bounds of the essential spectral radius for piecewise partially expanding maps, so that we could compare it with the lower bound obtained in the present paper (at least for this specific example).}
\end{remark}

\subsection{Local discontinuity}
\label{sec:local-example}

Let \(\cM = \bT^2\).
This example, denoted \(\cF_C : \cM \rightarrow \cM\), to be defined shortly, is a perturbation of a smooth uniformly expanding map in an arbitrary small neighbourhood, yet has large essential spectrum.
The only discontinuity will be an \(\epsilon\)-sized ``slit'' which is introduced by the perturbation.

Let \(f: \bT^1 \rightarrow \bT^1\) be the doubling map defined as \(f : x \mapsto 2 x \mod 1\).
For notational convenience, let \(x_0 = y_0 = \frac{1}{2}\).
For \(k \in \bN\) let \(a_k = f^{k} (x_0-\epsilon)\) and choose, once and for all, \(\epsilon>0\) small and such that \({\{a_k\}}_{k \in \bN}\) is an infinite set of distinct points, disjoint from the interval \([x_0,x_0+\epsilon]\).
Let \(J = [x_0,x_0+\epsilon] \times [y_0-\epsilon,y_0+\epsilon] \subset \bT^2\).
Fix \(\rho \in \cC^{\infty}(\bR,\bR)\) such that \(\rho(x)=1\) when \(x \leq 0\), that \(\rho(x)=0\) when \(x \geq 1\) and that, for all \(x\in \bR\), \(\rho^{\prime}(x) \leq 0\).
Let \(\rho_\epsilon(x)=\epsilon \rho(\epsilon^{-1}x)\).
Let \( T,S : \bT^2 \rightarrow \bT^2\) be defined as
\[
    \begin{split}
        T &: (x,y) \mapsto (f(x), f(y)),\\
        S &: (x,y) \mapsto 
        \begin{cases}
            (x,y)                         & \text{if \((x,y) \in \bT^2 \setminus J\)} \\
            (x - \rho_\epsilon(x-x_0), y) & \text{if \((x,y) \in J\)}
        \end{cases}
    \end{split}
\]
and hence let \(\cF_C : \cM \rightarrow \cM\) be defined as 
\[
    \cF_C = T \circ S.
\]
Note that \(\Gamma \), the set of discontinuities for \(\cF_C\), is composed of the vertical line segment
\(\{(x_0, y_0 + t): t \in [-\epsilon, \epsilon]\}\)
and the two horizontal line segments, \([x_0,x_0+\epsilon] \times \{y_0 - \epsilon\}\) and \([x_0,x_0+\epsilon] \times \{y_0 + \epsilon\}\).
As per Section~\ref{sec:results}, we consider the smooth manifold with boundary, which we denote \(M\), such that \(\cF_C : M \to \cM\) is \(\cC^\infty\).
We define
\[
      \gamr = \{x_0^{+}\} \times  [y_0-\epsilon, y_0+\epsilon], \quad \gaml = \{x_0^{-}\} \times  [y_0-\epsilon, y_0+\epsilon],
\]
where, as per the previous example, \(x_{0}^{+}\) and \(x_{0}^{-}\) correspond to the doubling of points at \(\{x_{0}\}\times  [y_0-\epsilon, y_0+\epsilon] \subset \cM\). 
Note that \(\gamr\) and \(\gaml\) are the only vertical components of \(\partial M\). 
The definition of \(a_k\) has the consequence that \(\cF_C^k(x_0^+,y) = (a_k,f^k(y))\).
Let \(\varphi = \abs{\det D\cF_C}^{-1}\) and consider, as specified in Theorem~\ref{thm:main}, the associated transfer operator \(\cL\).
Observe that \(\varphi(x,y) = \frac{1}{4}\) whenever \((x,y) \in \bT^2 \setminus J\).
Following the notation of Definition~\ref{def:proper}, \(\gamma_1 = F_{C} \gamr\) and, \(\gamma_{k+1} = \cF_{C}^{k}\gamma_1\), \(k \in \bN\).
As such, \(\gamma_k \subset \{a_k\} \times \bT^1\).

\begin{lemma}
    \label{lem:eg-C}
    The curve \(\gamr\) is a proper discontinuity.
\end{lemma}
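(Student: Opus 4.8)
The plan is to verify the four conditions of Definition~\ref{def:proper} for the curve $\gamr = \{x_0^+\} \times [y_0-\epsilon, y_0+\epsilon]$, proceeding exactly as in the two previous examples (Lemmas~\ref{prop:beta} and~\ref{prop:ex-B}) since the structure is closely parallel. The guiding observation is that $\cF_C^k(x_0^+, y) = (a_k, f^k(y))$, so the iterated curves are $\gamma_k \subset \{a_k\} \times \bT^1$, and everything reduces to the one-dimensional base dynamics of the doubling map $f$ together with the choice of $\epsilon$ ensuring the $\{a_k\}$ are infinitely many distinct points disjoint from $[x_0, x_0+\epsilon]$.

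First I would dispatch \ref{it:A0}: the map $S$ collapses the slit $J$ by shifting the $x$-coordinate via $\rho_\epsilon$, so $\cF_C$ takes genuinely different limits as $x \to x_0^-$ versus $x \to x_0^+$, establishing discontinuity at every point of $\iota\gamr$. For \ref{it:A1} I would construct the $\alpha_k$ inductively, setting $\alpha_1 \equiv 1$ and defining $\alpha_{k+1}(\cF_C x)$ via the required relation \eqref{eq:good-weights}; the fact that $\cF_C|_{\gamma_k}$ is (a.e.) injective — because the second coordinate evolves by the doubling map which is locally injective, and the curves are vertical segments mapping to vertical segments — makes this well defined, and as in the cocycle example one can likely take $\alpha_k$ constant, expressed through the derivative of $f^{k-1}$ at the relevant base point. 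Since $\varphi = \abs{\det D\cF_C}^{-1}$ away from the slit, this computation should be explicit and clean.

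The two conditions requiring care are \ref{it:A2} and \ref{it:A3}, which both hinge on the separation of the orbit $\{a_k\}$ from the initial discontinuity data. For \ref{it:A2}, I would argue that $\gamma_k \subset \{a_k\} \times \bT^1$ while $\Gamma_1 = F_C\Gamma$ lives over base points associated to $a_1$ (and possibly the other slit boundaries and the images of the $x_0^\pm$ doubling), so the disjointness $\gamma_k \cap \Gamma_1 \approxeq \emptyset$ for $k \geq 2$ follows from $a_k \neq a_1$, guaranteed by the distinctness of the $\{a_k\}$ and their disjointness from $[x_0, x_0+\epsilon]$. For \ref{it:A3}, which asserts $\cF_C^{-1}\gamma_{k+1} \cap \bigG \approxeq \gamma_k$, I would again reduce to the base map: a preimage of $a_{k+1}$ under the doubling map $f$ that also lies in the image orbit set $\widehat\Gamma$ must, by the one-dimensional separation properties of the $\{a_k\}$ orbit (the direct analogue of \eqref{eq:1D-shift} in the cocycle example), coincide with $a_k$. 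The main obstacle I anticipate is the geometry introduced by the horizontal components of $\Gamma$ (the segments at $y_0 \pm \epsilon$): unlike the clean cocycle case where $\Gamma$ is a single full vertical fibre, here $\bigG$ contains images of these horizontal pieces, and I must check that they do not create spurious positive-measure intersections with $\cF_C^{-1}\gamma_{k+1}$. I expect to handle this by noting that the horizontal segments and their images are transverse to (or otherwise meet in measure zero with) the vertical curves $\gamma_k$, so that the one-dimensional argument on the $x$-coordinate orbit governs the essential part of \ref{it:A3}, with the horizontal contributions absorbed into the $\approxeq$ equivalence.
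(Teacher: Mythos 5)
Your proposal is correct and follows essentially the same route as the paper: the same verification of \ref{it:A0}--\ref{it:A3}, the same reduction of \ref{it:A2} and \ref{it:A3} to the one-dimensional orbit $\{a_k\}$ of the doubling map (via the analogue of \eqref{eq:1D-shift} from \cite{BCJ22}), and the same constant choice $\alpha_k = (f^{k-1})' = 2^{k-1}$ using that $\varphi\cdot\Jac{\gamma_k}{\cF_C}\equiv\tfrac12$ away from $J$. The horizontal components of $\Gamma$ that you flag are dismissed in the paper for exactly the reason you anticipate: their images remain horizontal segments and so meet the vertical curves $\gamma_k$ in sets of zero one-dimensional Hausdorff measure.
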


\begin{proof}
    We choose \(\alpha_k \equiv 2^{k-1}\), \(k \in \bN\).
    Note that each \(\gamma_{k}\) is a vertical line whose first coordinate is equal to \(a_k\). 
    The choice of \(\epsilon\) means that \(\gamma_{k}\) never intersects \(J\).
    For small \(k\) these lines will have length \(2^{k}\epsilon\) and for large \(k\) they are equal to \(\{a_k\}\times \bT^1\).
    We will, in turn, check each of the required properties.

    \textbf{\ref{it:A0}}: This assumption is satisfied since \(\cF_C(x_0^+,y) = (f(x_0-\epsilon),f(y))\) whereas \(\cF_C(x_0^-,y) = (f(x_0),f(y))\) and \(f(x_0-\epsilon) \neq f(x_0)\).

    \textbf{\ref{it:A1}:}
    We see that \(\alpha_1 \equiv 1\) as required and so it remains to show that, for all \(x \in \gamma_{k}\), \(k\in\bN\),
    \begin{equation}
        \label{eq:def-alphak2}
        \alpha_{k+1}(\cF_C x) = \alpha_k(x) \varphi(x)^{-1} \Jac{\gamma_{k}}{\cF_C}(x)^{-1}.
    \end{equation}
    For every \(x\in \gamma_{k}\) we know that \(\Jac{\gamma_{k}}{\cF_C}(x) = 2\) and \(\varphi(x) = \frac{1}{4}\) so, consequently, \(\varphi(x)\Jac{\gamma_{k}}{\cF_C} \equiv \frac{1}{2}\).
    And so, since \(\alpha_k \equiv 2^{k-1}\), \ref{it:A1} is satisfied.
    
    \textbf{\ref{it:A2}:}
    We must show that  \(\gamma_{k} \cap \Gamma_1 \approxeq \emptyset \) for all \(k\geq 2\).
    We know that \(\Gamma_1\) is made up of horizontal and vertical line segments.
    However, since \(\gamma_k\) consists only of a vertical segment, we can avoid considering the horizontal segments.
    The vertical segments of \(\Gamma_1\) are contained within \(\{f(x_0-\epsilon),f(x_0)\} \times \bT^1 = \{a_1,f(x_0)\} \times \bT^1\).
    On the other hand, \(\gamma_k \subset \{a_k\} \times \bT^1\).
    Since \(\{a_k\}_k\) was assumed to be an infinite set of points, we know that \(a_n \neq a_k\), if \(n \neq k\), and \(a_{k} \neq x^+_{0}, x^{-}_{0}, 0\), for \(k \in \bN\) (otherwise \(a_{k}\) would be pre-periodic).
    Consequently the required conclusion holds.
    
    \textbf{\ref{it:A3}:}
    We must show that \(\cF_C^{-1}\gamma_{k+1} \cap \bigG\approxeq \gamma_{k}\) for all \(k\in\bN\).
    Again we can ignore the horizontal component of the discontinuity.
    Observe that \(\bigG \subset \left(\cup_{k=1}^{\infty} \{a_k\} \cup \{0\} \right) \times \bT^1\).
    Furthermore, the \(a_k\) have the property (see~\cite[Lemma 3.2]{BCJ22}) 
    \[
        \begin{split}
        &f^{-1}(a_{k+1}) \cap \left(\cup_{j=1}^{\infty} \{a_j\} \cup \{0\}\right) = a_{k} , \quad k \ge 1,
        \end{split}
    \]
    which readily implies \ref{it:A3}.
\end{proof}

\begin{lemma}
    \(\Lambda_{BV}(\cF_{C},\left|\det D\cF_{C} \right|^{-1},\gamr) = \Lambda_{L^{\infty}}(\cF_{C},\left|\det D\cF_{C} \right|^{-1},\gamr) = \frac{1}{2}\).
\end{lemma}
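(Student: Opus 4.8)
The plan is to evaluate each of the three quantities in \eqref{eq:Lambda} directly, exploiting the explicit structure of the orbit of \(\gamr\) established in Lemma~\ref{lem:eg-C}. Recall from there that every \(\gamma_k\) is a vertical segment (eventually all of \(\{a_k\}\times\bT^1\)) disjoint from the slit \(J\), that \(\varphi\equiv\tfrac14\) on \(\cM\setminus J\), and that \(\Jac{\gamma_k}{\cF_C}\equiv 2\), whence \((\varphi\cdot\Jac{\gamma_k}{\cF_C})\equiv\tfrac12\) on each \(\gamma_k\). These per-step identities are exactly what feeds a telescoping computation.

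First I would compute \(\Lambda_{BV}\). Writing \(x_j=\cF_C^j x\) for \(x\in\gamr\) and applying the cocycle (chain) rule for the Jacobian along the orbit of \(\gamr\),
\[
    \varphi_k(x)\,\Jac{\gamr}{\cF_C^k}(x)
    =\prod_{j=0}^{k-1}\bigl(\varphi\cdot\Jac{\cF_C^j\gamr}{\cF_C}\bigr)(x_j).
\]
For \(j\ge 1\) each factor equals \(\tfrac12\) by the above (since \(x_j\in\gamma_j\)), while the single \(j=0\) factor \((\varphi\cdot\Jac{\gamr}{\cF_C})(x)\) is uniformly bounded away from \(0\) and \(\infty\) — in fact it too equals \(\tfrac12\), because \(\rho'(0)=0\) forces \(\varphi\equiv\tfrac14\) on \(\gamr\). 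Hence the product is \(2^{-k}\) up to a bounded factor, so taking \(k\)-th roots and the \(\liminf\) gives \(\Lambda_{BV}=\tfrac12\). The same bookkeeping with the weight alone yields \(\varphi_k(x)=4^{-k}\) up to a bounded factor, so \(\Lambda_{L^\infty,1}=\tfrac14\).

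It then remains to handle \(\Lambda_{L^\infty,2}\), which differs from \(\Lambda_{BV}\) only by the factor \(\len{\gamma_{k+1}}^{-1}\). Here I would invoke the geometry recorded in Lemma~\ref{lem:eg-C}: the curves \(\gamma_k\) grow like \(2^k\epsilon\) only until they wrap around \(\bT^1\), after which \(\len{\gamma_k}\) is constant (the circumference). Thus \(\len{\gamma_{k+1}}\) is uniformly bounded above and below, so \(\len{\gamma_{k+1}}^{-1/k}\to 1\) and \(\Lambda_{L^\infty,2}=\Lambda_{BV}=\tfrac12\), consistent with Remark~\ref{rem:compare}. Combining, \(\Lambda_{L^\infty}=\max(\Lambda_{L^\infty,1},\Lambda_{L^\infty,2})=\max(\tfrac14,\tfrac12)=\tfrac12=\Lambda_{BV}\), as claimed. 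The only mildly delicate points — that the per-step factor is indeed \(\tfrac12\) at the initial boundary curve \(\gamr\) (where the point lies on \(\partial J\)) and that \(\len{\gamma_k}\) stabilises — are not genuine obstacles, since both are immediate from the explicit description of \(\cF_C\) and its slit; the argument is otherwise purely a telescoping of the data already assembled in Lemma~\ref{lem:eg-C}.
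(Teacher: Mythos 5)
Your proposal is correct and follows essentially the same route as the paper's proof: both rest on the identity \(\varphi\cdot\Jac{\gamma_k}{\cF_C}\equiv\tfrac12\) along the orbit of \(\gamr\), telescoped to give \(\Lambda_{BV}=\tfrac12\), together with the observation that \(\len{\gamma_k}\) eventually equals \(1\) so that \(\Lambda_{L^\infty}=\Lambda_{L^\infty,2}=\Lambda_{BV}\). Your version merely adds the (correct) explicit checks of the initial factor on \(\gamr\) and of \(\Lambda_{L^\infty,1}=\tfrac14\), which the paper leaves implicit.
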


\begin{proof}
    Recall that, for \(x\in \gamma_{k}\), \(\varphi(x) \Jac{\gamma_{k}}{\cF_C}(x) = \frac{1}{2}\).
    Consequently \eqref{eq:Lambda},
    \[
        \Lambda_{\operatorname{BV}}
        =\liminf_{k \rightarrow \infty} \left(\smash{\inf_{x \in \gamma_{k}}} \left| \varphi_k(x) \cdot \Jac{\gamr}{\cF_C^k}(x) \right|\right)^{1/k} = \tfrac{1}{2},                       
    \]
    Additionally, for all \(k\) sufficiently large, \(\len{\gamma_{k}}=1\) and so \(\Lambda_{L^{\infty}} = \Lambda_{L^{\infty},2}\).
\end{proof}

\begin{remark}
    In the first example (Section \ref{sec:affinemap}), $\alpha_k$ can be defined inductively for any weight (see the discussion of Assumption \ref{it:A1} in Section \ref{sec:discuss}). In the second and third example (Section~\ref{sec:cocycle}, Section~\ref{sec:local-example}) we exploit the fact that \(\varphi(x)\Jac{\gamma_{k}}{\cF_{B}}(x)\) is constant on each $\gamma_k$, allowing us to define \(\alpha_k\) constant (see \eqref{eq:def-alphak} and \eqref{eq:def-alphak2}).  In the third example any weight which was constant on \(\bT^2 \setminus J\) would suffice without modifying the argument as can be seen in the proof of Lemma~\ref{lem:eg-C}.
\end{remark}


\end{document}